 \theoremstyle{plain}
 \newtheorem{tm}{Theorem}[section]
 \newtheorem{lm}[tm]{Lemma}
\newtheorem{coro}[tm]{Corollary}
 \theoremstyle{definition}
 \newtheorem{defi}[tm]{Definition}
 \newtheorem{rema}[tm]{Remark}
  \newtheorem{ex}[tm]{Example}
  \newtheorem{pdr}[tm]{Procedure}
 \newtheorem*{th*}{Theorem}
\newcommand{\cl}[1]{\mathcal{#1}}
\newcommand{\Q}{\mathbb Q}
\newcommand{\Z}{\mathbb Z}
\newcommand{\N}{\mathbb N}
\newcommand{\R}{\mathbb R}
\newcommand{\F}{\mathbb F}
\newcommand{\Pp}{\mathbb P}
\newcommand{\cA}{\cl A}
\newcommand{\K}{{\mathbb K}}
\newcommand{\la}{\langle}
\newcommand{\ra}{\rangle}
\newcommand{\m}{\mathfra\K{m}}
\newcommand{\Cl}{\operatorname{Cl}}
\newcommand{\dis}{\displaystyle}
\def\aa{{\bf \alpha}}
\def\bb{\beta}
\def\t{{\bf t}}
\def\q{{\bf q}}
\def\y{{\bf y}}
\def\z{{\bf z}}
\def\uu{{\mathbf{u}}}
\def\vv{{\bf v}}
\def\x{{\bf x}}
\def\q{{\bf q}}
\def\m{{\bf m}}
\def\cc{{\bf c}}
\begin{document}


\title{Vanishing ideals of parameterized subgroups in a toric variety}
 \thanks{The author is supported by T\"{U}B\.{I}TAK-2211}

\author[Esma BARAN \"{O}ZKAN]{Esma BARAN \"{O}ZKAN}
\address[Esma BARAN \"{O}ZKAN]{Department of Mathematics, \c{C}ank{\i}r{\i} Karatekin University, \c{C}ank{\i}r{\i}, TURKEY}
\email{esmabaran@karatekin.edu.tr}
\keywords{evaluation code, toric variety, multigraded Hilbert function, vanishing ideal, parameterized code, lattice ideal}
\subjclass[2010]{Primary 14M25, 14G50; Secondary 52B20}


\begin{abstract}Let $\K$ be a finite field and $X$ be a complete simplicial toric variety over $\K$. We give an algorithm relying on elimination theory for finding generators of the vanishing ideal of a subgroup $Y_Q$ parameterized by a matrix $Q$ which can be used to study algebraic geometric codes arising from $Y_Q$. We give  a method  to compute the lattice $L$ whose ideal $I_L$ is exactly $I(Y_Q)$ under a mild condition. As applications, we give precise descriptions for the lattices corresponding to some special subgroups. We also prove a Nullstellensatz type theorem valid over finite fields, and share \verb|Macaulay2| codes for our algorithms.
\end{abstract}

\maketitle

\section{Introduction}Let $X$ be a  complete simplical toric variety determined by the fan $\Sigma$ over a finite field $\K=\F_q$ whose class group $\Cl(X)$ is torsion-free. Let $\vv_1,\dots,\vv_r\in \Z^n$ denote the  primitive generators of the one dimensional cones $\rho_1,\dots,\rho_r$ in $\Sigma$, respectively.  Recall from  \cite[Theorem 4.1.3]{CLSch} that when $\vv_1,\dots,\vv_r$ span   $\R^n$, the sequence
\begin{equation}\label{td1}\dis \xymatrix{ \mathfrak{P}: 0  \ar[r] & \Z^n \ar[r]^{\phi} & \Z^r \ar[r]^{{\bb}} & \Z^d \ar[r]& 0},\end{equation}
is exact, where $\phi$ has rows $\vv_1,\dots,\vv_r$ and $d=r-n$. To simplify notation, we write $\textbf{t}^\textbf{a}=t_1^{a_1}\cdots t_s^{a_s}$ for any $\textbf{a}=(a_{1},\dots,a_{s})\in\Z^s$. Dualizing the short exact sequence above yields the   exact sequence
\begin{equation}\label{td2}\dis \xymatrix{ \mathfrak{P}^*: 1  \ar[r] & G \ar[r]^{i} & (\K^*)^r \ar[r]^{\pi} & (\K^*)^n \ar[r]& 1},\end{equation}
where $\pi$ sends $(t_1,\dots, t_r)\in (\K^*)^r $ to $ (\mathbf{t}^{\uu_1}, \dots , \mathbf{t}^{\uu_n})$ such that  
$\uu_1,\dots, \uu_n$ are  columns of $\phi.$ It follows that the torus $T_X$ is isomorphic to quotient of $(\K^*)^r$ by the group $G=\ker(\pi)$. Then every element of $T_X$ can be defined as $[p_1:\cdots:p_r]:=G\cdot (p_1,\dots,p_r)$  for some $(p_1,\dots,p_r)\in (\K^*)^r$.
$X$ has  total coordinate ring  $S=\K[x_1,\dots,x_r]$ graded by the group $\Cl(X)\cong\Z^d$. The degree of a monomial $\x^\textbf{a}=x_1^{a_1}\cdots x_r^{a_s}$ is
$$\deg_{\Z^d}(\x^\textbf{a}):=a_1\deg_{\Z^d}(\x_1)+\cdots a_r\deg_{\Z^d}(\x_r)=a_1\beta_1+\cdots a_r\beta_r$$
such that $\beta_j:=\beta(e_j).$ The grading on $S$ yields a direct sum decomposition $S=\bigoplus_{\aa \in \N\beta} S_{\aa}$ where $S_{\aa}$ is vector space whose basis consists of  the monomials having degree $\aa$. 

For any matrix $Q=[\q_1 \q_2\cdots \q_r]\in M_{s\times r}(\Z)$, the subset
$Y_Q=\{[{\t}^{\q_{1}}:\cdots:{\t}^{\q_{r}}]\:|\:\t\in (\K^{*})^s\}$ of $T_X$ is called the toric set parameterized by $Q$. The subgroups in $X$  are exactly these  parametric sets by \cite[Theorem 3.2 and Corollary 3.7]{Sahin}. Given a subset $Y\subset X$, the vanishing ideal $I(Y)$ in $S$ is the graded ideal generated by homogenous polynomials in $S$ which vanish at all the elements in $Y$. Vanishing ideals play an important role in Algebraic Geometry and Coding Theory.  Computing  the generators of the vanishing ideal $I(Y_Q)$ makes it easy to determine its algebraic and geometric properties and to compute the basic parameters of a linear code obtained by evaluating homogeneous polynomials at the points of $Y_Q$, see  \cite{Algebraicmethodsforparameterizedcodesandinvariantsofvanishingoverfinitefields,SPR,BaranSahin,sasop}. 

The aim of this paper is to reveal that some of the results scattered in the literature about the vanishing ideal of $Y_Q$ can be generalised to the more general toric case by carefully adopting and modifying the arguments used. The problem of giving an algorithm computing minimal generators for $I(Y_Q)$ was solved by Villarreal, Simis and Renteria in \cite[Theorem 2.1]{Algebraicmethodsforparameterizedcodesandinvariantsofvanishingoverfinitefields}, when $X$ is a projective space. When $Y_Q$ is the torus $T_X$ of a weighted projective space $X$, Dias and Neves proved in \cite{Codesoveraweightedtorus} a generalized version of \cite[Theorem 2.1]{Algebraicmethodsforparameterizedcodesandinvariantsofvanishingoverfinitefields}. Our first generalization is Theorem \ref{t:elim}, which gives an expression for $I(Y_Q)$, leading to a method (see  Algorithm \ref{a:elim}) via Elimination Theory for computing a generating set. We provide a \verb|Macaulay2| code  for  Algorithm \ref{a:elim} in Procedure \ref{pdr:elim}. 

In \cite[Theorem 2.5]{Algebraicmethodsforparameterizedcodesandinvariantsofvanishingoverfinitefields}, the authors prove that
$I(Y_Q)=I_{L}$ for the lattice $L=L_Q+(q-1)L_{\bb}$ under a mild condition on $Q$. Applying this result they also proved in \cite[Proposition 4.3]{Algebraicmethodsforparameterizedcodesandinvariantsofvanishingoverfinitefields} that $Y_Q$ is nothing but the zero set in the torus $T_X$ of the lattice ideal $I_{L_Q}$, and concluded in \cite[Corollary 4.4]{Algebraicmethodsforparameterizedcodesandinvariantsofvanishingoverfinitefields} that $I(V_X(I_{L}))=I_{L}$. We generalize all these in Theorem \ref{t:hold} and Theorem \ref{tm:nullstellensatz}.

Lopez, Villarreal and Zarate computed in \cite{LVZ} the generators of the vanishing ideal of $Y_Q$ by determining  the lattice $L$ for toric sets parameterized by diagonal matrices in projective space. In \cite{Sahin}, \c{S}ahin not only showed that the vanishing ideal $I(Y_Q)$ is a lattice ideal for more general toric varieties but also generalized this result about diagonal matrices to more general toric varieties. We conclude the paper by giving an alternative proof in Theorem \ref{t:degenerate}.


\section{Vanishing Ideal via Elimination Theory} \label{S:elim} 
In this section, we give a method yielding an algorithm for computing the generators of the vanishing ideal of $Y_Q$. The following basic theorems will be used to obtain our first main result giving this method.

\begin{lm}\label{l:null} \cite[Lemma 2.1]{CombinatorialNullstellensatz} Let $\K$ be a field and $f$ be a polynomial in $\K[x_1,\dots,x_s]$ such that  $deg_{x_{i}}f\leq k_i$ for all $i$. Let  $\K_i\subseteq \K$ be   a finite set with $\lvert \K_i\rvert\geq k_{i}+1$ for $1\leq i\leq s$. If $f$ vanishes on $\K_1\times \K_2\times \cdots \times \K_s$, then $f=0$.
\end{lm}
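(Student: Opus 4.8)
The plan is to argue by induction on the number of variables $s$, reducing at each stage to the single-variable case — the familiar fact that a nonzero univariate polynomial of degree at most $k$ over a field has at most $k$ distinct roots.

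For the base case $s=1$, I would observe that if $f\in\K[x_1]$ satisfies $\deg f\le k_1$ and vanishes at every point of $\K_1$, then $f$ has at least $|\K_1|\ge k_1+1$ distinct roots; since a nonzero polynomial of degree at most $k_1$ over a field cannot have more than $k_1$ roots, this forces $f=0$.

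For the inductive step, assuming the claim for polynomials in $s-1$ variables, I would expand $f\in\K[x_1,\dots,x_s]$ as a polynomial in $x_s$ with coefficients in $\K[x_1,\dots,x_{s-1}]$,
\[
f=\sum_{j=0}^{k_s}c_j(x_1,\dots,x_{s-1})\,x_s^{j},
\]
noting that $\deg_{x_i}c_j\le k_i$ for $1\le i\le s-1$ directly from this expansion. For each fixed point $(a_1,\dots,a_{s-1})\in\K_1\times\cdots\times\K_{s-1}$, the specialization $f(a_1,\dots,a_{s-1},x_s)\in\K[x_s]$ has degree at most $k_s$ and vanishes on $\K_s$, a set of size at least $k_s+1$; by the base case it is the zero polynomial, so $c_j(a_1,\dots,a_{s-1})=0$ for every $j$. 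Since the point was arbitrary, each $c_j$ vanishes on $\K_1\times\cdots\times\K_{s-1}$, and the inductive hypothesis yields $c_j=0$ for all $j$, hence $f=0$.

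I do not anticipate a genuine obstacle here, as the argument is a routine induction. The only points deserving a moment of care are verifying that the degree bounds in $x_1,\dots,x_{s-1}$ are inherited by the coefficients $c_j$ (immediate from the expansion above) and keeping straight the distinction between a polynomial being identically zero and its evaluation map vanishing on a finite grid — which is precisely the content that the univariate base case supplies.
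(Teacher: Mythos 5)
Your proof is correct and is exactly the standard induction-on-variables argument; the paper itself gives no proof, citing Alon's Lemma 2.1, whose proof proceeds by the same induction you describe. The two points you flag (degree bounds inherited by the coefficients $c_j$, and distinguishing the zero polynomial from a vanishing evaluation map) are handled correctly, so there is nothing to add.
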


\begin{tm}\label{t:elimination}\cite[Theorem 2, p.87]{IdealsVarietiesandAlgorithms} Let $I\subset \K[x_1,\dots,x_k]$ be an ideal and let
	$G$ be a Gr\"obner basis of I with respect to lex order where $x_1> x_2> \cdots > x_k$ . Then,
	for every $0\leq l \leq k$ the set
	$$G_l = G \cap \K[x_{l+1},\dots,x_k]$$
	is a Gr\"obner basis of the $l$-th elimination ideal $I_l=I \cap \K[x_{l+1},\dots,x_k]$.
\end{tm}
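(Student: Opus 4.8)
The plan is to fix $l$ with $0 \le l \le k$ and to verify directly the two conditions defining a Gr\"obner basis of $I_l$ with respect to the lex order on $\K[x_{l+1},\dots,x_k]$ induced from the one on $\K[x_1,\dots,x_k]$: namely that $G_l \subseteq I_l$, and that the leading terms of the elements of $G_l$ generate $\langle \operatorname{LT}(I_l)\rangle$. The first containment is immediate, since any $g \in G_l$ lies in $G \subseteq I$ and in $\K[x_{l+1},\dots,x_k]$, hence in $I \cap \K[x_{l+1},\dots,x_k] = I_l$. For the second, I would reduce to showing that every nonzero $f \in I_l$ has its leading term divisible by the leading term of some element of $G_l$.

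To do this, fix a nonzero $f \in I_l$ and regard it as an element of $I$; because $G$ is a Gr\"obner basis of $I$, there is $g \in G$ with $\operatorname{LT}(g) \mid \operatorname{LT}(f)$. Since $f \in \K[x_{l+1},\dots,x_k]$, its leading monomial $\operatorname{LT}(f)$ involves only $x_{l+1},\dots,x_k$, and therefore so does the divisor $\operatorname{LT}(g)$. The crucial step --- and the only place where one really uses that the order is \emph{lex} with $x_1 > x_2 > \cdots > x_k$ --- is to deduce from this that $g$ itself lies in $\K[x_{l+1},\dots,x_k]$: every monomial occurring in $g$ is $\le \operatorname{LT}(g)$ in lex order, but any monomial carrying a positive exponent on some $x_i$ with $i \le l$ strictly exceeds in lex order the monomial $\operatorname{LT}(g)$, whose exponents on $x_1,\dots,x_l$ all vanish (comparing exponent vectors from the left, the first coordinate where they differ lies among the first $l$, and there the monomial involving an early variable is larger). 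Hence no monomial of $g$ involves $x_1,\dots,x_l$, so $g \in G \cap \K[x_{l+1},\dots,x_k] = G_l$, which is the element we wanted.

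Granting this, we obtain $\langle \operatorname{LT}(I_l)\rangle \subseteq \langle \operatorname{LT}(g) : g \in G_l\rangle$, the reverse inclusion being clear from $G_l \subseteq I_l$, so $G_l$ is a Gr\"obner basis of $I_l$; in particular it generates $I_l$, by the standard fact that a Gr\"obner basis generates its ideal. The one bookkeeping point to keep in mind is that the leading term of an element of $\K[x_{l+1},\dots,x_k]$ is the same whether it is computed for the full lex order on $\K[x_1,\dots,x_k]$ or for its restriction to the last $k-l$ variables, since that restriction is again a lex order; this makes the two possible meanings of $\operatorname{LT}(I_l)$ agree. I expect the lex-order argument of the second paragraph to be the only non-routine part; the remainder is just unwinding the definition of a Gr\"obner basis.
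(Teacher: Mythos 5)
Your proposal is correct, and it is essentially the standard proof of the Elimination Theorem found in the cited source: the paper itself does not reprove this statement but simply quotes it from Cox--Little--O'Shea, so there is no separate in-paper argument to compare against. The lex-order step you single out --- that a leading term with zero exponents on $x_1,\dots,x_l$ forces every monomial of $g$ to avoid those variables, hence $g\in G_l$ --- is exactly the crux of the textbook proof, and your verification of it (together with the observation that the restriction of lex to $\K[x_{l+1},\dots,x_k]$ is again lex, so the two notions of leading term agree) is accurate.
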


When $X=\Pp^n$, $S$ is graded by $\cA=\Z$ via $\deg(x_i)=1$ for all $i$. In this case,  there is a method to compute the vanishing ideal $I(Y_Q)$, see \cite[Theorem 2.1]{Algebraicmethodsforparameterizedcodesandinvariantsofvanishingoverfinitefields}. Recently, Tochimani and Villarreal \cite{TV} generalized this result to subsets of $X=\Pp^n$ that are parameterized by ratios of Laurent polynomials, whose proof does not generalize to all toric varieties. 
When $X=\Pp(w_1,\dots,w_r)$ is the weighted projective space, $S$ is also $\Z$-graded via $\deg(x_i)=w_i\in \N$ using the degree map $\bb=[w_1 \cdots w_r]$. In \cite{Codesoveraweightedtorus}, Dias and Neves  extended the same result to weighted projective spaces if $Q$ is the identity matrix, that is, $Y_Q=T_X$.  Our main contribution is to make necessary arrangements needed for a general toric variety to adapt the corresponding proof in \cite[Theorem 2.1]{Algebraicmethodsforparameterizedcodesandinvariantsofvanishingoverfinitefields}. Before we generalize these results to arbitrary toric varieties, let us recall that $\m=\m^+-\m^-$, where $\m^+,\m^-\in \N^r$, and $\x^{\m}$ denotes the monomial $x_1^{m_1}\cdots x_r^{m_r}$ for any $\m=(m_1,\dots,m_r)\in \Z^r$.

\begin{tm} \label{t:elim2}Let $R=\K[x_1,\dots,x_r,y_1,\dots,y_s, z_1,\dots, z_d,w]$ be a polynomial ring which is  an extension of $S$. Then  $I(Y_Q)=J\cap S $, where
	$$J=\la\{x_i{\y}^{{{\q}_i}^{-}}\textbf{z}^{{\beta_i}^-}-{\y}^{{{\q}_i}^{+}}\textbf{z}^{{\beta_i}^+}\}^{r}_{i=1}\cup\{y_i^{q-1}-1\}^{
		s}_{i=1},w{\y}^{{{\q}_1}^{-}}{\z}^{{\beta_1}^-}\cdots {\y}^{{{\q}_r}^{-}}{\z}^{{\beta_r}^-}-1\ra.$$\end{tm}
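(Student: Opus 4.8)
The plan is to prove the two inclusions $I(Y_Q)\subseteq J\cap S$ and $J\cap S\subseteq I(Y_Q)$ separately, the heart of the matter being the second one, which is where elimination theory and Lemma~\ref{l:null} enter. Throughout I will think of $R$ as a polynomial ring in the variables $x_1,\dots,x_r$ (the coordinates on $X$), the auxiliary parameters $y_1,\dots,y_s$ (corresponding to $\t\in(\K^*)^s$), the auxiliary variables $z_1,\dots,z_d$ (corresponding to the grading torus, i.e.\ an element of $G$ realizing a point of $T_X$ as a class in $(\K^*)^r$), and a single ``Rabinowitsch'' variable $w$ whose defining equation $w\,\y^{\q_1^-}\z^{\beta_1^-}\cdots\y^{\q_r^-}\z^{\beta_r^-}-1$ forces all the monomials $\y^{\q_i^-}\z^{\beta_i^-}$ to be invertible; this is the standard trick for encoding the open condition $\t\in(\K^*)^s$ and the projective/torus nonvanishing inside an affine ideal.

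For the inclusion $I(Y_Q)\subseteq J\cap S$: given a homogeneous $f\in I(Y_Q)$, I must show $f\in J$. The key observation is that in the quotient ring $R/J$ one has, for each $i$, the relation $x_i\,\y^{\q_i^-}\z^{\beta_i^-}\equiv \y^{\q_i^+}\z^{\beta_i^+}$, and since each $\y^{\q_i^-}\z^{\beta_i^-}$ is a unit there (by the $w$-equation), we may ``solve'' $x_i \equiv \y^{\q_i}\z^{\beta_i}$ as a Laurent monomial in the $y$'s and $z$'s modulo $J$. Substituting these expressions into $f$, using homogeneity of $f$ to cancel the $\z^{\beta_i}$ contributions (they pick up a factor $\z^{\deg_{\Z^d} f}$ that factors out, exactly as in \cite[Theorem 2.1]{Algebraicmethodsforparameterizedcodesandinvariantsofvanishingoverfinitefields} and its weighted analogue in \cite{Codesoveraweightedtorus}), and then using $y_i^{q-1}\equiv 1$ to reduce exponents mod $q-1$, reduces $f$ modulo $J$ to a Laurent polynomial in $y_1,\dots,y_s$ that vanishes on all of $(\K^*)^s$; clearing denominators and applying Lemma~\ref{l:null} (with $\K_i=\K^*$, $|\K_i|=q-1$, and exponents already reduced below $q-1$) shows this polynomial is $0$, hence $f\in J\cap S$.

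For the reverse inclusion $J\cap S\subseteq I(Y_Q)$: take $f\in J\cap S$, so $f\in\K[x_1,\dots,x_r]$ and $f=\sum g_k h_k$ with the $h_k$ among the listed generators of $J$. I must show $f$ vanishes at every point $[\t^{\q_1}:\cdots:\t^{\q_r}]$ of $Y_Q$. Fix $\t\in(\K^*)^s$. The point of $Y_Q$ has homogeneous coordinates $(\t^{\q_1},\dots,\t^{\q_r})$, well-defined up to the $G$-action; choosing a lift, there is $\z\in(\K^*)^d$ (reading off the $\Cl(X)\cong\Z^d$ data) so that setting $y_i=t_i$, $z_j=$ that lift's coordinates, $x_i=\t^{\q_i}$ (times the appropriate $\z^{\beta_i}$ bookkeeping factor), and $w=$ the inverse of the product monomial, all the generators of $J$ vanish simultaneously — this is a direct check generator by generator, the $x_i$-equations being the binomials defining the parameterization, the $y_i^{q-1}-1$ holding since $t_i\in\K^*=\F_q^*$, and the $w$-equation holding by construction. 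Hence $f$, being in $J$, vanishes at this point; since $f$ involves only the $x$'s, its value there is exactly $f([\t^{\q_1}:\cdots:\t^{\q_r}])$, and independence of the choice of lift follows from homogeneity of $f$. Therefore $f\in I(Y_Q)$.

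The main obstacle I anticipate is bookkeeping the grading correctly in the first inclusion: one must verify that substituting $x_i\mapsto \y^{\q_i}\z^{\beta_i}$ modulo $J$ sends a \emph{homogeneous} $f$ of degree $\aa\in\Z^d$ to $\z^{\aa}$ times a genuine Laurent polynomial in the $y$'s alone, so that the $z$-variables truly drop out and Lemma~\ref{l:null} becomes applicable in the $y$-variables only — this is exactly where the torsion-freeness of $\Cl(X)$ and the exactness of $\mathfrak{P}$ and $\mathfrak{P}^*$ are used, and where the argument of \cite{Algebraicmethodsforparameterizedcodesandinvariantsofvanishingoverfinitefields} needs the ``careful adaptation'' promised in the introduction. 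A secondary subtlety is ensuring that after reducing exponents via $y_i^{q-1}\equiv1$ one really lands in the range of exponents where Lemma~\ref{l:null} applies with $|\K^*|=q-1$; this forces the reduction to be done before invoking the lemma, and one should be slightly careful that the reduction does not reintroduce negative exponents once denominators are cleared.
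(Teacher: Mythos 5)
Your first inclusion $I(Y_Q)\subseteq J\cap S$ is essentially the paper's argument rephrased in quotient-ring language: solving $x_i\equiv \y^{\q_i}\z^{\beta_i}$ modulo $J$ (using that $h=\y^{\q_1^-}\z^{\beta_1^-}\cdots\y^{\q_r^-}\z^{\beta_r^-}$ is a unit mod $J$ by the $w$-equation), factoring out $\z^{\alpha}$ by homogeneity, reducing modulo $\{y_i^{q-1}-1\}$ and killing the remainder with Lemma~\ref{l:null}; the paper does exactly this, only with the denominators cleared explicitly by multiplying by $h^m$ and then by $w^m$ so as to stay inside the polynomial ring $R$. That part is fine, provided you write out the clearing of denominators.

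The second inclusion, however, has a genuine gap. By the paper's definition, $I(Y_Q)$ is the \emph{graded} ideal generated by \emph{homogeneous} polynomials vanishing on $Y_Q$, so to prove $J\cap S\subseteq I(Y_Q)$ it is not enough to check that every $f\in J\cap S$ vanishes at the representatives $(\t^{\q_1},\dots,\t^{\q_r})$: you must also show that $f$ is (a sum of) homogeneous polynomials vanishing on $Y_Q$. You in fact invoke ``homogeneity of $f$'' to get independence of the chosen lift, but homogeneity is precisely what has to be proved, so the argument is circular at that point. The vanishing check alone cannot settle this, because non-homogeneous binomials $\x^{\textbf{a}}-\x^{\textbf{b}}$ with $\beta\textbf{a}\equiv\beta\textbf{b} \pmod{q-1}$ but $\beta\textbf{a}\neq\beta\textbf{b}$ also vanish at all representatives of points of $T_X$; ruling such elements out of $J\cap S$ requires using the structure of $J$ itself. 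The paper does this by first using Theorem~\ref{t:elimination} to reduce to binomial elements $f=\x^{\textbf{a}}-\x^{\textbf{b}}$ of $J\cap S$, and then making the substitution $y_i=1$, $x_i=\z^{\beta_i}$, $w=1/\z^{\beta_1^-}\cdots\z^{\beta_r^-}$ in $R[z_1^{-1},\dots,z_d^{-1}]$, which turns the membership equation for $J$ into $\z^{a_1\beta_1+\cdots+a_r\beta_r}=\z^{b_1\beta_1+\cdots+b_r\beta_r}$ and hence forces $\deg_{\Z^d}\x^{\textbf{a}}=\deg_{\Z^d}\x^{\textbf{b}}$. (Alternatively, one can note that $J$ is homogeneous for the extended $\Z^d$-grading with $\deg z_k=e_k$, $\deg y_j=0$, $\deg w=-(\beta_1^-+\cdots+\beta_r^-)$, so $J\cap S$ is graded.) Some argument of this kind must be added before your evaluation step, which then correctly yields $f(\t^{\q_1},\dots,\t^{\q_r})=0$ and $f\in I(Y_Q)$.
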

\begin{proof} We follow the steps in the proof of \cite[Theorem 2.1]{Algebraicmethodsforparameterizedcodesandinvariantsofvanishingoverfinitefields} making some necessary modifications. 
	$I(Y_Q)$ is generated by homogeneous polynomials, since it  is a homogeneous ideal. To show the inclusion $I(Y_Q)\subseteq J\cap S$, pick any generator $f=\sum\limits_{i=1}^{k} c_i \x^{\m_i}$ of degree $\alpha=\sum_{j=1}^{r}\beta_jm_{ij}$. We use binomial theorem to write any monomial $\x^{\m_i}$ as
	$$\begin{array}{lcl}
		\x^{\m_{i}} &=&x_1^{m_{i1}}\cdots x_r^{m_{ir}}=\left( x_1-\dfrac{{\y}^{{{\q}_1}^{+}}\textbf{z}^{{\beta_1}^+}} {{\y}^{{{\q}_1}^{-}}\textbf{z}^{{\beta_1}^-}}+\dfrac{{\y}^{{{\q}_1}^{+}}\textbf{z}^{{\beta_1}^+}}{{\y}^{{{\q}_1}^{-}}\textbf{z}^{{\beta_1}^+}}\right) ^{m_{i1}}\cdots\left( x_r-\dfrac{{\y}^{{{\q}_r}^{+}}\textbf{z}^{{\beta_1}^+}} {{\y}^{{{\q}_r}^{-}}\textbf{z}^{{\beta_1}^-}}+\dfrac{{\y}^{{{\q}_r}^{+}}\textbf{z}^{{\beta_r}^+}}{{\y}^{{{\q}_r}^{-}}\textbf{z}^{{\beta_r}^-}}\right)^{m_{ir}} \; \;\\
		
		&=&\sum\limits_{j=1}^{r} g_{ij}\dfrac{\ x_j{\y}^{{{\q}_j}^{-}}\textbf{z}^{{\beta_j}^-}-{\y}^{{{\q}_j}^{+}}\textbf{z}^{{\beta_j}^+}}{\left({\y}^{{{\q}_1}^{-}}\textbf{z}^{{\beta_1}^-}\right)^{m_{i1}}\cdots  \left({\y}^{{{\q}_r}^{-}}\textbf{z}^{{\beta_r}^-}\right)^{m_{ir}}}+ \textbf{z}^{\alpha}\left( \dfrac{{\y}^{{{\q}_1}^{+}}} {{\y}^{{{\q}_1}^{-}}}\right) ^{m_{i1}}\cdots\left( \dfrac{{\y}^{{{\q}_r}^{+}}} {{\y}^{{{\q}_r}^{-}}}\right) ^{m_{ir}}\\
	\end{array}$$
	where $g_{i1},...,g_{ir} \in R$ for all $i$. Substituting all the $\x^{\m_i}$ in $f$, we get
	$$ 
	f=\sum\limits_{j=1}^{r} g_{j}\dfrac{\ x_j{\y}^{{{\q}_j}^{-}}\textbf{z}^{{\beta_j}^-}-{\y}^{{{\q}_j}^{+}}\textbf{z}^{{\beta_j}^+}}{\left({\y}^{{{\q}_1}^{-}}\textbf{z}^{{\beta_1}^-}\right)^{m'_{1}}\cdots  \left({\y}^{{{\q}_r}^{-}}\textbf{z}^{{\beta_r}^-}\right)^{m'_{r}}} +\textbf{z}^{\alpha}f({\y}^{q_1},\dots,{\y}^{q_r}),\\
	$$
	where $g_j=\sum\limits_{i=1}^{k}\,\,c_j g_{i,j}$ and $m'_j=\sum_{i=1}^k m_{ij}$. Then set  $m=\sum_{j=1}^{k}\sum_{i=1}^{r}m_{ij}$. We multiply $f$ by $h^{m}$ to clear all the denominators which yields that
	$$fh^{m}=\sum\limits_{j=1}^{r}G_j\left(x_j{\y}^{{{\q}_j}^{-}}\textbf{z}^{{\beta_j}^-}-{\y}^{{{\q}_j}^{+}}\textbf{z}^{{\beta_j}^+}\right)+\textbf{z}^{{\alpha}'}{\left({{\y}^{{{\q}_1}^{-}}}\cdots {{\y}^{{{\q}_r}^{-}}}\right)}^{m}f({\y}^{q_1},\dots,{\y}^{q_r}), $$ 
	where $h={\y}^{{{\q}_1}^{-}}{\z}^{{\beta_1}^-}\cdots {\y}^{{{\q}_r}^{-}}{\z}^{{\beta_r}^-}$ and $${\alpha}'=\sum_{j=1}^{r} \left[{\beta_j}^{-} (m-m_{ij}) + {\beta_j}^{+}m_{ij} \right] \in \N^n, \quad {G}_j=g_j\prod\limits_{j=1}^{r}\left({\y}^{{{\q}_j}^{-}}\textbf{z}^{{\beta_j}^-}\right)^{m-m'_{j}}\in\;\; R.$$ We can apply division algorithm and divide $F={\left({{\y}^{{{\q}_1}^{-}}}\cdots {{\y}^{{{\q}_r}^{-}}}\right)}^{m}f({\y}^{q_1},\dots,{\y}^{q_r})$ by $\{{{y_i}^{q-1}}-1\}_{i=1}^{s}$ which leads to 
	\begin{equation}\label{e:esma}
		\resizebox{.9\hsize}{!}
		{$fh^{m}=\sum\limits_{j=1}^{r}G_i\left(x_j{\y}^{{{\q}_j}^{-}}\textbf{z}^{{\beta_j}^-}-{\y}^{{{\q}_j}^{+}}\textbf{z}^{{\beta_j}^+}\right)+\textbf{z}^{{\alpha}'}\left({\sum\limits_{i=1}^{s}H_i({y_i}^{q-1}-1)}+E(y_1,\dots,y_s) \right).$}
	\end{equation}
	
	Lemma \ref {l:null} forces that $E$ is the zero polynomial, as $E(\t)=0$ for all $\t=(t_1,\dots,t_s)\in {(\K^{*})}^s$.  
	
	Multiplying now the equation (\ref{e:esma}) by ${w}^m$, we have 
	$$f(hw)^{m}=\sum\limits_{j=1}^{r}{w}^mG_j\left(x_j{\y}^{{{\q}_j}^{-}}\textbf{z}^{{\beta_j}^-}-{\y}^{{{\q}_j}^{+}}\textbf{z}^{{\beta_j}^+}\right)+{w}^m\textbf{z}^{{\alpha}'}{\sum_{i=1}^{s}H_i({y_i}^{q-1}-1)}.$$
	As $f(hw)^{m}=f(hw-1+1)^m=f(H(hw-1)+1)=fH(hw-1)+f$ for some $H\in R$ by binomial theorem, it follows that
	$$f=\sum\limits_{j=1}^{r}{w}^{m}G_i\left(x_j{\y}^{{{\q}_j}^{-}}\textbf{z}^{{\beta_j}^-}-{\y}^{{{\q}_j}^{+}}\textbf{z}^{{\beta_j}^+}\right)+{w}^{m}\textbf{z}^{{\alpha}'}{\sum_{i=1}^{s}H_i({y_i}^{q-1}-1)}-fH(hw-1)\\
	$$
	which establishes the inclusion $I(Y_Q)\subseteq J\cap S$.
	
	As $J$ is generated   by binomials, so is $J\cap S$ by Theorem \ref{t:elimination}. Take $f= \textbf{x}^\textbf{a}- \textbf{x}^\textbf{b} \in J\cap S$ and write
	\begin{equation}\label{my_second_eqn} f=\sum\limits_{j=1}^{r}G_j\left(x_j{\y}^{{{\q}_j}^{-}}\textbf{z}^{{\beta_j}^-}-{\y}^{{{\q}_j}^{+}}\textbf{z}^{{\beta_j}^+}\right)+{\sum\limits_{i=1}^{s}H_i({y_i}^{q-1}-1)}+H(hw-1),
	\end{equation}
	for some polynomials $G_1,\dots,G_r, H_1, \dots, H_s, H$ in $R$. As the last equality is valid also in the ring $R[{z_1}^{-1},\dots,{z_d}^{-1}]$ setting $y_i=1, x_i=\textbf{z}^{\beta_i}, w=1/\textbf{z}^{{\beta_1}^-}\cdots\textbf{z}^{{\beta_r}^-}$ gives
	$$\textbf{z}^{a_1\beta_1+\cdots+a_r\beta_r}-\textbf{z}^{b_1\beta_1+\cdots+b_r\beta_r}=0.$$
	Thus, $f=\textbf{x}^\textbf{a}- \textbf{x}^\textbf{b}$ is homogeneous. By setting $x_i=\t^{\q_i}, y_j=t_j,z_k=1$ for all $i,j,k$ and $w=1/\t^{\q_1^{-}} \cdots \t^{\q_r^{-}}$ in (\ref{my_second_eqn}), we obtain $f(\t^{\q_1},\dots,\t^{\q_r})=0$ and so $f\in I(Y_Q)$. Consequently, $J\cap S\subseteq I(Y_Q)$.
\end{proof}

\begin{rema} Given a toric variety $X$, we have the matrix $\phi$. There are many alternatives for the matrix $\bb$ such that $\mathfrak{P}$ is exact. However, by \cite[Theorem 8.6 and Corollary 8.8]{CombinatorialCommutativeAlgebra}, we can choose $\beta_j\in \N^d$, where $\N$ is the set of non-negative integers. 
\end{rema}

\begin{ex} The matrix $\bb$ obtained by the following procedure have negative entries. 
	\begin{verbatim}
		i1 : q=11; Phi=matrix{{1,0},{0,1},{-1,2},{0,-1}}; r=numRows Phi;
		i2 : n=numColumns Phi; (D,P,K) = smithNormalForm Phi; Beta=P^{n..r-1};
		o2 = | -1  2 -1  0 |
		|  0  1  0  1 |
	\end{verbatim}
	Note that multiplying the first row by $-1$ and then adding twice the second row we get $\bb=\begin{bmatrix}
		1 & 0 & 1& 2\\
		0 & 1 & 0& 1  
	\end{bmatrix}$ to be a better alternative!
\end{ex}

In practice, different matrices $Q$ may give the same subgroup $Y_Q$ of $T_X$. As we work over the finite field $\F_q$, we note that restricting to matrices $Q$ with non-negative entries strictly smaller than $q-1$, does not harm the generality, since $t^{q-1}\equiv 1 \: \mbox{mod} \: q$, for any $t\in \F_q^*$. This reduction together with the assumption that $\beta_j\in \N^d$ will decrease the complexity of the algorithm for finding a generating set for $I(Y_Q)$ which will be based on the following result whose proof is skipped. 

\begin{tm} \label{t:elim}Let $R=\K[x_1,\dots,x_r,y_1,\dots,y_s, z_1,\dots, z_d]$ be a polynomial ring which is  an extension of $S$. Then  $I(Y_Q)=J\cap S $, where
	$$J=\la\{x_i-{\y}^{{{\q}_i}}\textbf{z}^{{\beta_i}}\}^{r}_{i=1}\cup\{y_i^{q-1}-1\}^{
		s}_{i=1}\ra.$$\end{tm}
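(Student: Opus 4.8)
The strategy is to reduce Theorem~\ref{t:elim} to Theorem~\ref{t:elim2} by showing that the two elimination ideals $J\cap S$ coincide. Write $J'$ for the ideal of Theorem~\ref{t:elim2} living in $R' = \K[x_1,\dots,x_r,y_1,\dots,y_s,z_1,\dots,z_d,w]$, and $J$ for the ideal of Theorem~\ref{t:elim} living in $R = \K[x_1,\dots,x_r,y_1,\dots,y_s,z_1,\dots,z_d]$. Under the assumptions now in force---namely $\beta_j\in\N^d$ and each $\q_i$ has non-negative entries, so that $\q_i^- = 0$ and $\beta_i^- = 0$---the generators $x_i\y^{\q_i^-}\z^{\beta_i^-} - \y^{\q_i^+}\z^{\beta_i^+}$ of $J'$ collapse to $x_i - \y^{\q_i}\z^{\beta_i}$, and the monomial $h = \y^{\q_1^-}\z^{\beta_1^-}\cdots\y^{\q_r^-}\z^{\beta_r^-}$ becomes $1$, so the extra generator $wh - 1$ of $J'$ becomes $w - 1$. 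Thus $R'/J' \cong R/J$ via the map killing $w-1$ (equivalently, $w\mapsto 1$), and this isomorphism fixes $S$ pointwise. Hence $J'\cap S = J\cap S$, and Theorem~\ref{t:elim2} gives $I(Y_Q) = J'\cap S = J\cap S$.

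Concretely, I would argue as follows. First observe that the substitution $w = 1$ defines a ring homomorphism $\psi\colon R' \to R$ that is the identity on $R$ and whose kernel is $(w-1)$. Since the generator $wh-1 = w-1$ lies in this kernel and the remaining generators of $J'$ are (under our sign hypotheses) literally the generators of $J$ viewed inside $R'$, we get $\psi(J') = J$ and more precisely $\psi^{-1}(J) = J' + (w-1) = J'$, the last equality because $w-1 \in J'$. Therefore $\psi$ induces an isomorphism $\bar\psi\colon R'/J' \xrightarrow{\sim} R/J$ restricting to the identity on the image of $S$. An element $f\in S$ lies in $J'\cap S$ iff its class in $R'/J'$ is zero iff (applying $\bar\psi$) its class in $R/J$ is zero iff $f\in J\cap S$. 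Combining with Theorem~\ref{t:elim2} yields $I(Y_Q) = J\cap S$, as claimed. If one prefers to avoid the reduction to Theorem~\ref{t:elim2} and argue from scratch, the two inclusions can be re-derived exactly as there, with all the $\y^{\q_i^-}$, $\z^{\beta_i^-}$, and $h$ set to $1$ throughout: the forward inclusion $I(Y_Q)\subseteq J\cap S$ uses the binomial expansion $x_j = (x_j - \y^{\q_j}\z^{\beta_j}) + \y^{\q_j}\z^{\beta_j}$, no clearing of denominators is needed, and Lemma~\ref{l:null} again kills the leftover polynomial $E(y_1,\dots,y_s)$; the reverse inclusion $J\cap S\subseteq I(Y_Q)$ substitutes $x_i = \t^{\q_i}$, $y_j = t_j$, $z_k = 1$ into a binomial representation of $f\in J\cap S$, and separately checks homogeneity by substituting $y_i = 1$, $x_i = \z^{\beta_i}$.

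The only genuinely subtle point is making sure the sign-normalisation hypotheses actually remove $w$ cleanly: one must confirm that with $\q_i^- = 0$ and $\beta_i^- = 0$ the monomial $h$ is identically $1$, so that $wh - 1 = w - 1$ generates exactly the kernel of $w\mapsto 1$; if $Q$ or some $\beta_j$ had negative entries this would fail and the variable $w$ would be needed. Since the discussion preceding the statement establishes precisely that we may assume $0 \le (\q_i)_k < q-1$ and $\beta_j\in\N^d$, this obstacle is already handled, and the remainder of the proof is the bookkeeping sketched above. I therefore expect no serious difficulty; the proof is essentially the observation that Theorem~\ref{t:elim2} degenerates to Theorem~\ref{t:elim} once the parameterizing data is taken to be non-negative.
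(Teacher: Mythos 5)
Your argument is correct, but your primary route differs from the paper's. You deduce Theorem~\ref{t:elim} from Theorem~\ref{t:elim2}: under the normalization $Q\in M_{s\times r}(\N)$ (entries $<q-1$) and $\beta_i\in\N^d$ one has $\q_i^-=0$ and $\beta_i^-=0$, so $h=1$ and the ideal $J'$ of Theorem~\ref{t:elim2} becomes $JR'+(w-1)R'$; the evaluation $\psi\colon R'\to R$, $w\mapsto 1$, is surjective with kernel $(w-1)$, fixes $S$, and maps $J'$ onto $J$ with $J\subseteq J'$, whence $J'\cap S=J\cap S$ and the conclusion follows. This is sound, provided one also notes (as you do, and as the paper does in the paragraph preceding the statement) that the normalization of $Q$ does not change $Y_Q$. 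The paper instead proves the statement directly, rerunning the scheme of Theorem~\ref{t:elim2} with the simplifications you anticipate in your secondary sketch: expand $x_j=(x_j-\y^{\q_j}\z^{\beta_j})+\y^{\q_j}\z^{\beta_j}$ (no denominators, hence no variable $w$ and no multiplication by $h^m$), divide $f(\y^{\q_1},\dots,\y^{\q_r})$ by the $y_i^{q-1}-1$ and kill the remainder $E$ by Lemma~\ref{l:null}, then use Theorem~\ref{t:elimination} to reduce to binomials $\x^{\mathbf a}-\x^{\mathbf b}\in J\cap S$, checking homogeneity via $y_i=1$, $x_i=\z^{\beta_i}$ (no localization at the $z_k$ is needed, unlike in Theorem~\ref{t:elim2}) and vanishing via $x_i=\t^{\q_i}$, $y_j=t_j$, $z_k=1$. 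Your reduction buys brevity and makes transparent that the auxiliary variable $w$ is only there to handle negative exponents; the paper's direct proof buys a self-contained argument inside $R$ that does not depend on having proved the more general Theorem~\ref{t:elim2} first.
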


Theorem \ref{t:elim} gives rise to Algorithm \ref{a:elim} for computing the binomial generators of the ideal $I(Y_Q)$ in the ring $\F_q[x_1,\dots,x_r]$ from the given matrices $Q$ and $\bb$ with entries from the set $\N$ of non-negative integers, for every prime power $q$. 

\begin{algorithm}
	\caption{ Computing the generators of vanishing ideal $I(Y_Q)$.}\label{a:elim}
	\begin{flushleft}
		\hspace*{\algorithmicindent}\textbf{Input} The matrices $Q\in M_{s\times r}(\N)$, $\beta\in M_{d\times r}(\N)$ and a prime power $q$.\\
		\hspace*{\algorithmicindent}\textbf{Output} The generators of $I(Y_Q)$.
	\end{flushleft}
	\begin{algorithmic}[1]
		\State Write the ideal $J$ of $R$  using Theorem \ref{t:elim}.
		\State Find the Gr\"obner basis $G$ of $J$ wrt. lex order  $z_1>\cdots>z_d>y_1>\cdots>y_s>x_1>\cdots>x_r$.
		\State Find $G\cap S$ so that $I(Y_Q)=\la G\cap S \ra$.
	\end{algorithmic}
\end{algorithm}

Using the function \verb|toBinomial| creating a binomial from a list of integers (see \cite{ComputationsinAlgebraicGeometrywithMacaulay2}), we write a \verb|Macaulay2| code which implements this algorithm.

\begin{pdr}\label{pdr:elim}Given a particular input $q$, $Q,\beta$, the following procedure find the generators of $I(Y_Q)$.
	\begin{verbatim}	
		i2 : toBinomial = (b,R) -> (top := 1_R; bottom := 1_R;
		scan(#b, i -> if b_i > 0 then top = top * R_i^(b_i)
		else if b_i < 0 then bottom = bottom * R_i^(-b_i)); top - bottom);		
		i3 : r=numColumns Q;s=numRows Q; d=numRows Beta; F=ZZ/q;		
		i4 : C=(id_(ZZ^r)| -transpose Q | -transpose Beta);	
		i5 : R=F[x_1..x_r,y_1..y_s,z_1..z_d]; 	
		i6 : J = ideal apply(entries C, b -> toBinomial(b,R))+ideal apply (s,i->R_(r+i) ^(q-1)-1)
		i7 : IYQ=eliminate (J,for i from r to r+s+d-1 list R_i)
	\end{verbatim}
\end{pdr}

\begin{ex}
	\label{ex:gb} 
	Let $X=\cl H_{2}$ be the Hirzebruch surface. The fan for $X$  has primitive ray generators $\vv_1=(1,0)$, $\vv_2=(0,1)$, $\vv_3=(-1,2)$, $\vv_4=(0,-1)$ and its coordinate ring $S=\K[x_1,x_2,x_3,x_4]$. Then for $X$, the matrices  $\phi$ and $\beta$ in \ref{td1} are given by \\
	$$\phi=\begin{bmatrix}
		1 & 0\\
		0 & 1\\
		-1 & 2\\
		0 &  -1  
	\end{bmatrix},\:\beta=\begin{bmatrix}
		1 & 0 & 1& 2\\
		0 & 1 & 0& 1  
	\end{bmatrix}.$$
	This shows that $S=\K[x_1,x_2,x_3,x_4]$ is graded by letting  $$\deg_{\Z^2}(x_1)=\deg_{\Z^2}(x_3)=(1,0),\quad \deg_{\Z^2}(x_2)=(0,1), \quad \deg_{\Z^2}(x_4)=(2,1).$$
	Since the map $\pi$ in \ref{td2} is defined by $\pi:\t\mapsto (t_1t_3^{-1},t_2t_3^{2}t_4^{-1})$ for $X$, $$G=\ker(\pi)=\{(t_1,t_2,t_1,t_1^{2}t_2)\;|\;t_1,t_2\in\K^*\}\cong (\K^*)^2.$$ Thus the torus of $X=X_\Sigma$ has quotient representation  $ T_X \cong {(\K^*)^2}\cong(\K^*)^{4} /G$. Consider the set parameterized by $Q=[1\;2\;3\;4]$, that is,
	$Y_Q=\{[t:t^2:t^3:t^4]~~~|~~ t\in \K^*\}$.
	In order to compute the generators of $I(Y_Q)$ in \verb|Macaulay2| it suffices to supply $q$ together with $Q$ and $\beta$:
	\begin{verbatim}
		i1 : q=11; Beta=matrix {{1,0,1,2},{0,1,0,1}}; Q=matrix {{1,2,3,4}};
	\end{verbatim}	
	\noindent and we obtain $I(Y_Q)=\la x_1^{2}{x_2}-{x_4},x_1^{5}-x_3^{5}\ra$ by using  Procedure \ref{pdr:elim}.
\end{ex}

\section{Conceptual Descriptions of the Lattice of a Vanishing Ideal}
\label{S:ConceptualLattice}
A subgroup $L\subseteq \Z^r$ is called a lattice. For any lattice $L$, the lattice ideal corresponding to $L$, denoted $I_L$, is the  ideal generated by binomials $\x^\textbf{a}-\x^\textbf{b}$ for all $\textbf{a},\textbf{b}\in \N^r$  such that $\textbf{a}-\textbf{b}\in L$.

The vanishing ideal $I(Y_Q)$ is a lattice ideal, see \cite{Sahin}. Then the lattice associated to $I(Y_Q)$ is identified in the  following lemma.
\begin{lm}\cite[Lemma 3.2]{BaranSahin}\label{lm:equality} The ideal $I(Y_Q)$ is equal to $I_{L}$ where $L=\{\m\in L_\beta: Q\m\equiv 0\;\mbox{mod}\;(q-1)\}$.
\end{lm}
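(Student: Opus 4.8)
The plan is to establish the two inclusions $I(Y_Q) \subseteq I_L$ and $I_L \subseteq I(Y_Q)$ after first identifying the lattice $L_\beta$ explicitly: since $\beta\colon \Z^r\to\Z^d$ is the degree map, $L_\beta = \ker(\beta)$ is exactly the set of $\m\in\Z^r$ with $\sum_j m_j\beta_j = 0$, i.e. the set of \emph{homogeneous} exponent differences. Thus a binomial $\x^\a - \x^\b$ with $\a-\b\in L_\beta$ is precisely a homogeneous binomial in $S$, and $L$ consists of those homogeneous $\m = \a-\b$ additionally satisfying $Q\m \equiv 0 \pmod{q-1}$. So I first want to note that $I_L$ is generated by homogeneous binomials and is therefore a graded ideal, making the comparison with the graded ideal $I(Y_Q)$ legitimate degree by degree.

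For $I_L \subseteq I(Y_Q)$: take a generator $\x^\a - \x^\b$ with $\m = \a - \b \in L$. Evaluate at an arbitrary point $[\t^{\q_1}:\cdots:\t^{\q_r}]\in Y_Q$, $\t\in(\K^*)^s$. Because $\m\in L_\beta$ the binomial is homogeneous, so its value on a point of the torus $T_X$ is well-defined up to the scaling action; concretely one checks $(\t^{\q})^\a / (\t^{\q})^\b = \t^{Q(\a-\b)} = \t^{Q\m}$, and writing $Q\m = (q-1)\k$ for some $\k\in\Z^s$ we get $\t^{Q\m} = (\t^{q-1})^{\k} = 1$ since every coordinate $t_i\in\K^* = \F_q^*$ satisfies $t_i^{q-1}=1$. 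Hence $\x^\a - \x^\b$ vanishes on all of $Y_Q$, giving the inclusion. The only care needed here is to phrase the evaluation correctly in the Cox/quotient coordinates — that the homogeneity of the binomial is exactly what makes "evaluation at a point of $Y_Q$" well-defined, and this is where membership in $L_\beta$ (not just the congruence condition) is used.

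For the reverse inclusion $I(Y_Q)\subseteq I_L$, I would invoke the fact, already recalled in the excerpt (\cite{Sahin}), that $I(Y_Q)$ \emph{is} a lattice ideal, say $I(Y_Q) = I_{L'}$ for some lattice $L'\subseteq\Z^r$; then it suffices to show $L' \subseteq L$. Given $\m\in L'$, write $\m = \m^+ - \m^-$; then $\x^{\m^+} - \x^{\m^-}\in I(Y_Q)$, so it is homogeneous, forcing $\beta\m^+ = \beta\m^-$, i.e. $\m\in L_\beta$; and it vanishes on $Y_Q$, so $\t^{Q\m} = 1$ for every $\t\in(\F_q^*)^s$. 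Taking $\t$ with a single coordinate a generator of the cyclic group $\F_q^*$ and the rest $1$, the condition $\t^{Q\m}=1$ forces the corresponding component of $Q\m$ to be divisible by $q-1$; ranging over all coordinates gives $Q\m\equiv 0 \pmod{q-1}$, hence $\m\in L$. Therefore $L'\subseteq L$, so $I(Y_Q) = I_{L'}\subseteq I_L$, and combined with the first inclusion, $I(Y_Q) = I_L$.

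The main obstacle I anticipate is purely bookkeeping around the quotient presentation of $T_X$: making precise that evaluating a homogeneous binomial at a class $[\t^{\q_1}:\cdots:\t^{\q_r}]$ is independent of the representative, and correctly tracking how the exponent $Q\m$ arises. Once that is set up, both directions reduce to the elementary fact that for $t\in\F_q^*$ and $a\in\Z$ one has $t^a = 1$ iff $(q-1)\mid a$ — together with the characterization $L_\beta = \ker\beta$ as the homogeneous sublattice. If one prefers not to cite \cite{Sahin} for "$I(Y_Q)$ is a lattice ideal", an alternative is to derive it from Theorem~\ref{t:elim}: the ideal $J$ is generated by binomials, hence so is the elimination ideal $J\cap S = I(Y_Q)$, and a binomial ideal that is prime-to-the-coordinate-hyperplanes (which holds here since $Y_Q\subseteq T_X$, so no $x_i$ vanishes on $Y_Q$) is a lattice ideal; but citing \cite{Sahin} keeps the argument short.
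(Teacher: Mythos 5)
This lemma is not proved in the paper at all: it is imported by citation from \cite{BaranSahin}, so there is no internal proof to compare yours against. Judged on its own, your argument is correct and sits naturally in the paper's framework: for $I_L\subseteq I(Y_Q)$ you check that a generator $\x^{\a}-\x^{\b}$ with $\a-\b\in L$ is homogeneous (because $\a-\b\in L_\beta=\ker_\Z\beta$) and vanishes at every representative $(\t^{\q_1},\dots,\t^{\q_r})$, since its value is $\t^{Q\a}-\t^{Q\b}=\t^{Q\b}(\t^{Q(\a-\b)}-1)$ and $t^{q-1}=1$ for $t\in\F_q^*$; for the reverse inclusion you quote from \cite{Sahin} that $I(Y_Q)=I_{L'}$ is a lattice ideal (a fact the paper itself invokes in the sentence preceding the lemma, so there is no circularity) and show $L'\subseteq L$ by evaluating $\x^{\m^{+}}-\x^{\m^{-}}$ at points $\t$ having a generator of the cyclic group $\F_q^*$ in one coordinate and $1$ elsewhere, which correctly yields $(q-1)\mid (Q\m)_i$ for every $i$.

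One step should be spelled out rather than asserted: you write that $\x^{\m^{+}}-\x^{\m^{-}}\in I(Y_Q)$ ``so it is homogeneous.'' Membership in a graded ideal does not by itself force a binomial to be homogeneous (e.g.\ $x-y^2\in\la x,y\ra$). The correct justification is: $I(Y_Q)$ is graded, so if $\deg_{\Z^d}\x^{\m^{+}}\neq\deg_{\Z^d}\x^{\m^{-}}$ then each monomial would be a homogeneous component lying in $I(Y_Q)$; but every element of $I(Y_Q)$ vanishes at the representatives $(\t^{\q_1},\dots,\t^{\q_r})\in(\K^*)^r$ of points of $Y_Q\subseteq T_X$, where no monomial vanishes --- a contradiction. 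Hence $\beta\m^{+}=\beta\m^{-}$ and $\m\in L_\beta$. With that one line added your proof is complete; your closing alternative (getting the lattice-ideal property from Theorem~\ref{t:elim} together with the criterion of \cite{Binomialideals} for binomial ideals saturated with respect to the variables, which applies since no $x_i$ vanishes on $Y_Q$) is also viable if you wish to avoid citing \cite{Sahin}.
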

This description is not very useful in practice, for it requires some operations. In this section, we  give a handier description of the lattice of the ideal $I(Y_Q)$, in terms of $Q$ and $\beta$, under a condition on the lattice $\mathcal{L}=QL_\beta=\{Q\m| \m\in L_\beta\}$ using Lemma \ref{lm:equality}. Before stating it, let us remind that $\mathcal{L}:(q-1)=\{\m\in {\Z}^s|(q-1)\m \in \mathcal{L}\}$ and the colon module $\mathcal{L}:(q-1)\Z^s$ are the same. Given any matrix $Q$, $\ker_{\Z}Q$ is a lattice, denoted $L_Q$.


\begin{tm}\label{t:hold}
	Let ${L}=(L_Q \cap {L_\beta})+(q-1)L_\beta$. Then 
	$I_{L}\subseteq I(Y_Q)$. The equality holds if and only if $\mathcal{L}=\mathcal{L}:(q-1)$.
\end{tm}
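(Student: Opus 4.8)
The plan is to start from the identification in \rl{equality}, namely $I(Y_Q)=I_L$ with $L=\{\m\in L_\beta: Q\m\equiv 0\pmod{q-1}\}$, and compare this lattice with $L'=(L_Q\cap L_\beta)+(q-1)L_\beta$. The inclusion $I_{L'}\subseteq I(Y_Q)$ is the easy half: it suffices to show $L'\subseteq L$. Indeed if $\m\in L_Q\cap L_\beta$ then $\m\in L_\beta$ and $Q\m=0\equiv 0\pmod{q-1}$; and if $\m=(q-1)\m'$ with $\m'\in L_\beta$ then $\m\in L_\beta$ (since $L_\beta$ is a lattice) and $Q\m=(q-1)Q\m'\equiv 0\pmod{q-1}$. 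Since both generators of $L'$ lie in $L$, we get $L'\subseteq L$, hence $I_{L'}\subseteq I_L=I(Y_Q)$. For the lattice ideal containment one uses the standard fact that $L'\subseteq L$ implies $I_{L'}\subseteq I_L$ (every binomial generator $\x^{\a}-\x^{\b}$ of $I_{L'}$ has $\a-\b\in L'\subseteq L$).

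For the equivalence, the claim is that $L'=L$ if and only if $\cl L=\cl L:(q-1)$, where $\cl L=QL_\beta\subseteq\Z^s$. The natural device is the surjective $\Z$-linear map $\bar Q\colon L_\beta\to \cl L$, $\m\mapsto Q\m$, whose kernel is exactly $L_Q\cap L_\beta$. Under this map, $L=\bar Q^{-1}\big((q-1)\Z^s\cap \cl L\big)=\bar Q^{-1}(\cl L\cap (q-1)\Z^s)$, while $L'=\ker\bar Q+(q-1)L_\beta=\bar Q^{-1}\big(\bar Q((q-1)L_\beta)\big)=\bar Q^{-1}\big((q-1)\cl L\big)$, using that $\ker\bar Q\subseteq L'$ so $L'$ is saturated with respect to $\bar Q$ and hence equals the preimage of its image. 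Since $\bar Q$ is surjective, pulling back is injective on subgroups containing $\ker\bar Q$, so $L=L'$ if and only if $\cl L\cap(q-1)\Z^s=(q-1)\cl L$. Now divide by $q-1$: an element $\x\in\cl L$ lies in $(q-1)\Z^s$ iff $\x=(q-1)\y$ for some $\y\in\Z^s$, i.e. iff $\y\in\cl L:(q-1)$ and $(q-1)\y\in\cl L$; so $\cl L\cap(q-1)\Z^s=(q-1)\big(\cl L:(q-1)\big)$. Therefore $\cl L\cap(q-1)\Z^s=(q-1)\cl L$ is equivalent, after cancelling the torsion-free factor $q-1$, to $\cl L:(q-1)=\cl L$. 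Combined with the previous paragraph this gives $L=L'\iff \cl L=\cl L:(q-1)$, and since $I_L=I_{L'}$ iff $L=L'$ for lattices (lattice ideals determine and are determined by their lattices), we conclude $I(Y_Q)=I_{L'}$ iff $\cl L=\cl L:(q-1)$.

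The main obstacle I anticipate is bookkeeping with the preimage descriptions: one must be careful that $L'$ really is the full $\bar Q$-preimage of $(q-1)\cl L$ and not something smaller, which relies precisely on the observation $\ker\bar Q\subseteq L'$ — this is where the summand $L_Q\cap L_\beta$ in the definition of $L'$ earns its keep. A second small point to handle cleanly is the passage from lattice equalities to lattice-ideal equalities: I would invoke that for sublattices of $\Z^r$ the correspondence $L\mapsto I_L$ is injective (e.g. because $L=\{\a-\b:\x^{\a}-\x^{\b}\in I_L\}$ can be recovered from $I_L$), so that $I_{L'}=I_L$ forces $L'=L$; alternatively one can argue directly that $L'\subsetneq L$ produces a binomial in $I(Y_Q)\setminus I_{L'}$. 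Everything else — surjectivity of $\bar Q$, $\ker\bar Q=L_Q\cap L_\beta$, and the cancellation of $q-1$ in torsion-free groups — is routine.
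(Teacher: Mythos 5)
Your proof is correct. Both you and the paper reduce the theorem, via Lemma~\ref{lm:equality}, to comparing the lattice $L'=(L_Q\cap L_\beta)+(q-1)L_\beta$ with $L_1=\{\m\in L_\beta:\ Q\m\equiv 0 \ \mathrm{mod}\ (q-1)\}$, and your easy inclusion $L'\subseteq L_1$ is the same as the paper's. Where you diverge is the equivalence: the paper proves $L_1\subseteq L'$ iff $\mathcal{L}:(q-1)\subseteq\mathcal{L}$ by two separate element-chasing implications (lift $\z\in\mathcal{L}:(q-1)$ and decompose, and conversely), whereas you run everything through the surjection $\bar Q\colon L_\beta\to\mathcal{L}$, identifying $L_1=\bar Q^{-1}\bigl(\mathcal{L}\cap(q-1)\Z^s\bigr)$ and $L'=\bar Q^{-1}\bigl((q-1)\mathcal{L}\bigr)$, then using the subgroup correspondence over $\ker\bar Q=L_Q\cap L_\beta$ together with $\mathcal{L}\cap(q-1)\Z^s=(q-1)\bigl(\mathcal{L}:(q-1)\bigr)$ and cancellation of the torsion-free factor $q-1$. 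Your packaging gives both directions at once and pinpoints exactly where the hypothesis enters, at the price of slightly more structural machinery; the paper's version is more pedestrian but fully self-contained. You are also more explicit than the paper on one tacit point: passing from equality (or containment) of lattice ideals back to equality of lattices, which the paper assumes when it says it ``suffices'' to compare $L_1$ and $L'$; your remark that $L$ is recoverable from $I_L$ (or a direct argument producing a binomial witness) is the careful way to justify that step.
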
 
\begin{proof} We start with the proof of the inclusion $I_{L}\subseteq I(Y_Q)$. By the virtue of Lemma \ref{lm:equality}, it suffices to prove that $L\subseteq L_1=\{\m\in L_\beta: Q\m\equiv 0\;\mbox{ mod}\; (q-1)\}$, as $I(Y_Q)=I_{L_1}$. Take $\m\in L$. Since $L=(L_Q \cap{L_\beta})+(q-1)L_\beta\subseteq L_\beta$, we have $\m \in L_\beta$. On the other hand, we can write $\m=\m'+(q-1)\m''$ for some $\m'\in L_Q\cap L_\beta$ and $\m''\in L_\beta$. Since $Q\m=(q-1)Q\m''$, it follows that $\m\in L_1$, completing the proof of the inclusion.  
	
	Now, in order to show that $I(Y_Q)\subseteq I_{L}$ iff $\mathcal{L}=\mathcal{L}:(q-1)$, it is enough to prove that $L_1 \subseteq L$ iff $\mathcal{L}:(q-1)\subseteq \mathcal{L}$. Assume first that $L_1 \subseteq L$ and take $\textbf{z}\in \mathcal{L}:(q-1)$. This means that there exist $\m \in L_\beta$ such that $(q-1)\textbf{z}=Q\m$. So, $\m\in L_1 \subseteq L$ and we have $\m=\m'+(q-1)\m''$ for some $\m'\in L_Q\cap L_\beta$ and $\m''\in L_\beta$. Thus, $(q-1)\textbf{z}=Q\m=(q-1)Q\m''$, and we have $\textbf{z}=Q\m''\in \mathcal{L}$. Therefore, $\mathcal{L}:(q-1)\subseteq \mathcal{L}$.
	
	Suppose now that $\mathcal{L}:(q-1)\subseteq \mathcal{L}$ and let $\m \in L_1$. Then $\m \in L_\beta$ and $Q\m=(q-1)\textbf{z}$ for some $\textbf{z}\in \Z^s$. So, $\textbf{z}\in \mathcal{L}:(q-1)\subseteq \mathcal{L}$ yielding $\textbf{z}=Q\m'$ for some $\m'\in L_\beta$. Hence $Q(\m-(q-1)\m')=0$ and so, $\m-(q-1)\m'\in L_Q\cap L_\beta$. This implies that $\m=(\m-(q-1)\m')+(q-1)\m'\in L$. Hence, $L_1 \subseteq L$.
\end{proof}

We can check if the condition above is satisfied and in the affirmative case we can compute the generators of the lattice using the following code in \verb|Macaulay2|.

\begin{pdr}\label{pdr:hold} Checks if the condition $\mathcal{L}=\mathcal{L}:(q-1)$ holds and produces a basis for $L$. 
	\begin{verbatim} 
		i2: s=numRows Q;
		i3: LL=image (Q*Phi);    
		i4: if LL:(q-1)*(ZZ^s)==LL then print yes else print no;
		i5: ML=mingens ((q-1)*(image Phi)+intersect(ker Q,image Phi));
	\end{verbatim}
\end{pdr}

\begin{ex}\label{Ex:cond} Consider the Hirzebruch surface $X=\cl H_{2}$ over the field $\F_{2}$ and take $Q=[1\;2\;3\;4]$. The input is as follows: 
	\begin{verbatim} 
		i1 : q=2;Phi=matrix{{1,0},{0,1},{-1,2},{0,-1}};Q=matrix {{1,2,3,4}};
	\end{verbatim}
	\noindent and then Procedure \ref{pdr:hold} tells us that the condition is satisfied and gives $L=\la(-1,0,1,0),(-2,-1,0,1)\ra$. 
	
\end{ex}

\begin{defi}\label{d: homo}
	$Q$ is called homogeneous, if there is a matrix $A\in M_{d\times s}(\mathbb{\Q})$ such that $AQ=\beta$. 
\end{defi}

\begin{lm} \label{lm:hom1}  Let $Q=[\q_1 \cdots  \q_r]\in M_{s\times r}(\Z)$. $Q$ is homogeneous iff $L_Q \subseteq L_\beta$.
\end{lm}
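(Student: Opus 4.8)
The plan is to unwind the definitions of ``homogeneous'' and of the lattices $L_Q$, $L_\beta$ and then translate the statement into a linear-algebra fact over $\Q$. Recall $L_Q = \ker_\Z Q$ and, since $\beta$ is the degree map, $L_\beta = \ker_\Z \beta$ (the lattice of ``degree-zero'' vectors, which is exactly $\phi(\Z^n)$ by exactness of $\mathfrak{P}$). So the claim $L_Q \subseteq L_\beta$ says: every integer vector killed by $Q$ is killed by $\beta$.

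For the forward direction, assume $Q$ is homogeneous, so $AQ = \beta$ for some $A \in M_{d\times s}(\Q)$. If $\m \in L_Q$, i.e.\ $Q\m = 0$, then $\beta\m = AQ\m = 0$, so $\m \in L_\beta$; this direction is immediate. For the converse, assume $L_Q \subseteq L_\beta$. I would argue that over $\Q$ this gives an inclusion of kernels $\ker_\Q Q \subseteq \ker_\Q \beta$: indeed $L_Q$ spans $\ker_\Q Q$ over $\Q$ (any rational vector in $\ker_\Q Q$ can be cleared of denominators to land in $L_Q$), and likewise $L_\beta$ spans $\ker_\Q\beta$, so $\ker_\Q Q \subseteq \ker_\Q \beta$ as subspaces of $\Q^r$. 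A standard fact in linear algebra then yields that the rows of $\beta$ lie in the row space of $Q$ over $\Q$: if $\ker B \subseteq \ker B'$ for matrices with the same number of columns, then the row space of $B'$ is contained in the row space of $B$ (the row space is the orthogonal complement of the kernel, and inclusions of subspaces reverse under $\perp$). Applying this with $B = Q$, $B' = \beta$ gives a matrix $A \in M_{d\times s}(\Q)$ with $AQ = \beta$, i.e.\ $Q$ is homogeneous.

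The only mildly delicate point is the passage from the integral inclusion $L_Q \subseteq L_\beta$ to the rational inclusion $\ker_\Q Q \subseteq \ker_\Q \beta$, which rests on the observation that a finitely generated subgroup of $\Z^r$ of rank $k$ spans a $k$-dimensional $\Q$-subspace and, conversely, $\ker_\Q Q \cap \Z^r = L_Q$ with $\ker_\Q Q = \Q \cdot L_Q$; this is routine. I expect no real obstacle here: the whole statement is essentially the assertion that ``$\beta$ factors through $Q$ over $\Q$'' is equivalent to ``$\ker Q \subseteq \ker \beta$,'' phrased at the level of the natural integral lattices, and both implications are short once the kernels are compared over $\Q$.
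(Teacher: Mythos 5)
Your proof is correct and takes essentially the same route as the paper: the forward direction is identical, and for the converse both arguments reduce the lattice inclusion $L_Q \subseteq L_\beta$ to the rational fact that the row space of $\beta$ lies in the row space of $Q$ (the paper via the stacked matrix $[Q\;\beta]^T$ and comparing kernels, you via kernel--row-space duality over $\Q$). Your explicit justification of the passage from the integral inclusion to $\ker_\Q Q \subseteq \ker_\Q \beta$ by clearing denominators is a detail the paper leaves implicit, but it is the same argument in substance.
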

\begin{proof} Suppose  that $Q$ is homogeneous. So, $AQ=\beta$ for some matrix  $A\in M_{d\times s}(\mathbb{\Q})$. Take an element $\textbf{m}$ of $L_Q$. Since $Q\textbf{m}=0$, we have $\beta\textbf{m}=AQ\textbf{m}=0$. Hence, $\textbf{m}\in L_\beta$ and thus $L_Q \subseteq L_\beta$. Conversely, assume that $L_Q \subseteq L_\beta$. Denote by $Q'$ the ${(s+d)\times r}$ matrix	 $[Q\;\beta]^T$. Then $L_{Q'}=L_Q\cap L_\beta=L_Q$ which implies that the rows of $\beta$ belong to the row space of $Q$. So, a row of $\bb$ can be expressed as a $\Q$-linear combination of the rows of $Q$. Let the following be the $i$-th row of $\beta$:
	$$ \begin{array}{lcl}a_{i1}[q_{11} \cdots q_{1r}]+\cdots+a_{is}[q_{s1} \cdots q_{sr}]&=&[(a_{i1}q_{11}+\cdots+a_{is}q_{s1})\quad \cdots \quad (a_{i1}q_{1r}+\cdots+a_{is}q_{sr})]\\
		&=&[([a_{i1}\cdots a_{is}]\q_{1}) \quad \cdots \quad ([a_{i1}\cdots a_{is}]\q_{r})].
	\end{array}$$
	So, for $A=(a_{ij})\in M_{d\times s}(\mathbb{\Q})$ we have $AQ=\beta$.
\end{proof}

The next corollary is a generalization of Theorem 2.5 in \cite{Algebraicmethodsforparameterizedcodesandinvariantsofvanishingoverfinitefields} studying the case $X=\Pp^n$.

\begin{coro}
	Let $Q=[\q_1 \cdots  \q_r]\in M_{s\times r}(\Z)$ be a homogeneous matrix and  ${L}=L_Q +(q-1)L_\beta$. Then 
	$I_{L}\subseteq I(Y_Q)$. The equality holds if and only if $\mathcal{L}=\mathcal{L}:(q-1)$.
\end{coro}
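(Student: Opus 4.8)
The plan is to deduce the corollary directly from Theorem~\ref{t:hold} by showing that the hypothesis ``$Q$ is homogeneous'' collapses the lattice $L=(L_Q\cap L_\beta)+(q-1)L_\beta$ appearing there to the simpler lattice $L=L_Q+(q-1)L_\beta$ claimed here. By Lemma~\ref{lm:hom1}, $Q$ homogeneous is equivalent to $L_Q\subseteq L_\beta$, and hence $L_Q\cap L_\beta=L_Q$. Substituting this equality into the statement of Theorem~\ref{t:hold} immediately turns $(L_Q\cap L_\beta)+(q-1)L_\beta$ into $L_Q+(q-1)L_\beta$, so the two lattices literally coincide under the hypothesis.

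The steps, in order, are: first, invoke Lemma~\ref{lm:hom1} to record $L_Q\subseteq L_\beta$, so that $L_Q\cap L_\beta=L_Q$; second, observe that the lattice $L$ in the present statement therefore equals the lattice $L$ in Theorem~\ref{t:hold}; third, apply Theorem~\ref{t:hold} verbatim to obtain $I_L\subseteq I(Y_Q)$ together with the equivalence ``$I_L=I(Y_Q)$ if and only if $\mathcal{L}=\mathcal{L}:(q-1)$''. Since $\mathcal{L}=QL_\beta$ does not change, nothing further needs to be checked.

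There is essentially no obstacle here: the corollary is a transparent specialization, and the only ``content'' is the translation between the homogeneity condition of Definition~\ref{d: homo} and the inclusion of kernels, which has already been handled in Lemma~\ref{lm:hom1}. If one wanted to make the write-up self-contained one could add a sentence noting that $L_Q\subseteq L_\beta$ forces $L\subseteq L_\beta$, so $I_L$ is genuinely a sublattice-ideal of the vanishing ideal of the torus, but this is already implicit. The proof will accordingly be one short paragraph citing Lemma~\ref{lm:hom1} and Theorem~\ref{t:hold}.

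\begin{proof}
By Lemma~\ref{lm:hom1}, the matrix $Q$ being homogeneous is equivalent to $L_Q\subseteq L_\beta$, whence $L_Q\cap L_\beta=L_Q$. Therefore the lattice $L=L_Q+(q-1)L_\beta$ considered here coincides with the lattice $(L_Q\cap L_\beta)+(q-1)L_\beta$ of Theorem~\ref{t:hold}. Applying Theorem~\ref{t:hold} we obtain $I_L\subseteq I(Y_Q)$, and equality holds if and only if $\mathcal{L}=\mathcal{L}:(q-1)$.
\end{proof}
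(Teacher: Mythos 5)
Your proof is correct and follows exactly the paper's argument: homogeneity of $Q$ gives $L_Q\subseteq L_\beta$ via Lemma~\ref{lm:hom1}, so $L_Q\cap L_\beta=L_Q$ and the lattice coincides with that of Theorem~\ref{t:hold}, which is then applied directly. Nothing further is needed.
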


\begin{proof}
	Since $Q$ is homogeneous, $L_Q\subseteq L_\beta$ which shows that $L=L_Q\cap L_\beta+ (q-1)L_\beta=L_Q +(q-1)L_\beta$. Therefore, $I(Y_Q)=I_L$ from  Theorem \ref{t:hold}.
\end{proof}

Using Theorem \ref{t:hold}, we give other proofs of the following facts proven for the first time in \cite{Sahin}.
\begin{coro} \label{c:torusideal}
	$I(T_X)=I_{(q-1)L_\beta} $
\end{coro}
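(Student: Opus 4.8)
The plan is to recognize that $I(T_X)$ is precisely $I(Y_Q)$ when $Y_Q=T_X$, and that this happens exactly when $Q$ is chosen so that the parameterization is the identity on the torus. Recall from the excerpt that $T_X$ itself is the toric set $Y_Q$ parameterized by a suitable matrix; indeed, taking $Q=\beta$ (the degree matrix), the point $[\t^{\q_1}:\cdots:\t^{\q_r}]=[\t^{\beta_1}:\cdots:\t^{\beta_r}]$ ranges over all of $T_X$ as $\t$ ranges over $(\K^*)^s$ with $s=d$, since $\beta:\Z^r\to\Z^d$ is surjective and its dual map cuts out exactly $T_X$ inside $X$. So I would begin by fixing $Q=\beta$ and observing $Y_\beta=T_X$, hence $I(T_X)=I(Y_\beta)$.

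Next I would apply Theorem \ref{t:hold} with this choice $Q=\beta$. The lattice appearing there is $L=(L_Q\cap L_\beta)+(q-1)L_\beta$, which with $Q=\beta$ becomes $L=(L_\beta\cap L_\beta)+(q-1)L_\beta=L_\beta+(q-1)L_\beta=(q-1)L_\beta$, since $(q-1)L_\beta\subseteq L_\beta$ means the sum collapses — wait, that gives $L_\beta$, not $(q-1)L_\beta$. Let me reconsider: the correct choice is not $Q=\beta$ but rather $Q$ taken so that $L_Q$ is as large as possible, namely $Q$ the matrix whose kernel lattice is all of $\Z^r$... that is, $Q=0$, or more precisely we want $Q$ such that $Y_Q=T_X$ and $L_Q\supseteq L_\beta$. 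Actually the natural choice is to note that $T_X=Y_Q$ for $Q=\beta$ only uses $s=d$ parameters, but one could equally use the $r$-parameter description: the torus $T_X\cong(\K^*)^r/G$, and $[t_1:\cdots:t_r]$ already ranges over all of $T_X$ as $(t_1,\dots,t_r)$ ranges over $(\K^*)^r$. This corresponds to $Q=I_r$, the $r\times r$ identity. Then $L_Q=L_{I_r}=\ker_\Z(I_r)=0$, so $(L_Q\cap L_\beta)+(q-1)L_\beta=0+(q-1)L_\beta=(q-1)L_\beta$, which is exactly the lattice in the statement.

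Then it remains to invoke the equality criterion of Theorem \ref{t:hold}: $I(Y_Q)=I_L$ if and only if $\mathcal L=\mathcal L:(q-1)$, where $\mathcal L=QL_\beta$. With $Q=I_r$ we get $\mathcal L=I_r\cdot L_\beta=L_\beta$, and the condition $L_\beta=L_\beta:(q-1)$ holds automatically: if $(q-1)\m\in L_\beta$, i.e.\ $\beta((q-1)\m)=0$, then $(q-1)\beta\m=0$ in the torsion-free group $\Z^d$, forcing $\beta\m=0$, so $\m\in L_\beta$. Hence the equality holds unconditionally, and $I(T_X)=I(Y_{I_r})=I_{(q-1)L_\beta}$, as claimed.

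The main obstacle I anticipate is simply being careful about \emph{which} matrix $Q$ realizes $T_X$ as $Y_Q$, since different choices ($Q=\beta$ with $s=d$ versus $Q=I_r$ with $s=r$) give different-looking lattices $L$ in Theorem \ref{t:hold}, and only the $Q=I_r$ choice directly produces $(q-1)L_\beta$; one must confirm that $Y_{I_r}$ is genuinely all of $T_X$, which follows from the quotient presentation $T_X\cong(\K^*)^r/G$ in \eqref{td2}. Everything else is a direct substitution into the already-proven Theorem \ref{t:hold} together with the easy verification that the colon condition is vacuous here because $\Cl(X)\cong\Z^d$ is torsion-free.
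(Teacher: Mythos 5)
Your proposal is correct and follows essentially the same route as the paper: take $Q=I_r$ (so that $Y_Q=T_X$ via the quotient presentation), note $L_Q=\{0\}$ and $\mathcal{L}=L_\beta$, and apply Theorem \ref{t:hold}, with the colon condition $\mathcal{L}=\mathcal{L}:(q-1)$ holding because $\Z^d\cong\Cl(X)$ is torsion-free. Your explicit check of that condition is in fact slightly more careful than the paper's one-line justification, and your abandoned detour through $Q=\beta$ is harmless (that choice is the one used for the corollary about the point $[1:\cdots:1]$).
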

\begin{proof}
	$T_X$ is the toric set parameterized by the identity matrix $Q=I_r$. It is clear that $L_Q=\mbox{ker}_{\Z}I_r=\{0\}$. Notice also that $\mathcal{L}=QL_\beta=\{I_{r}\m| \m\in L_\beta\}=L_\beta$. Since $L_\beta$ is torsion free, the condition $\mathcal{L}=\mathcal{L}:(q-1)$ is satisfied. As $L=(q-1)L_\beta$, we have that $I(T_X)=I_{(q-1)L_\beta} $ by Theorem \ref{t:hold}.
\end{proof}

\begin{coro} The vanishing ideal of the point $[1:\cdots:1]$ is the toric ideal
	$I_{L_\beta}$.
\end{coro}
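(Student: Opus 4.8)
The plan is to recognize this final corollary as the special case of Theorem \ref{t:hold} (equivalently, of Corollary \ref{c:torusideal}'s neighbor) obtained by taking $Y_Q$ to be the single point $[1:\cdots:1]$, which is parameterized by the matrix $Q = \mathbf{0} \in M_{s\times r}(\Z)$ (say $s=1$, $Q$ the zero row vector): indeed $\t^{\q_i} = 1$ for every $i$ and every $\t \in (\K^*)^s$, so $Y_Q = \{[1:\cdots:1]\}$. First I would record that with $Q$ the zero matrix, $L_Q = \ker_{\Z} Q = \Z^r$, so $L_Q \cap L_\beta = L_\beta$. Hence the lattice of Theorem \ref{t:hold} is $L = (L_Q \cap L_\beta) + (q-1)L_\beta = L_\beta + (q-1)L_\beta = L_\beta$.

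Next I would check that the equality condition $\mathcal{L} = \mathcal{L}:(q-1)$ of Theorem \ref{t:hold} holds here. Since $\mathcal{L} = Q L_\beta = \{0\}$ and $\{0\}:(q-1) = \{0\}$, the condition is trivially satisfied (alternatively: $\mathcal{L}$ is torsion-free, exactly as in the proof of Corollary \ref{c:torusideal}). Therefore Theorem \ref{t:hold} gives $I(Y_Q) = I_L = I_{L_\beta}$, i.e. the vanishing ideal of the point $[1:\cdots:1]$ is the lattice ideal $I_{L_\beta}$. One should also note that $I_{L_\beta}$ is precisely the toric ideal of $X$, since $L_\beta = L_{\bb} = \ker_{\Z} \bb$ is a saturated lattice (it is the kernel of the degree map $\bb$, hence $\Z^r/L_\beta \cong \Z^d$ is torsion-free), so "toric ideal" is the right terminology.

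Alternatively, and perhaps more cleanly, one can deduce this directly from Corollary \ref{c:torusideal} together with the observation that $\{[1:\cdots:1]\}$ is a subgroup (the trivial one) of $T_X$ and apply Lemma \ref{lm:equality}: here $L_1 = \{\m \in L_\beta : Q\m \equiv 0 \bmod (q-1)\} = L_\beta$ because $Q = \mathbf 0$ makes the congruence vacuous, so $I(Y_Q) = I_{L_1} = I_{L_\beta}$ immediately. I would probably present both the Theorem \ref{t:hold} route and this short Lemma \ref{lm:equality} route, since the latter needs no verification of the saturation condition.

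I do not anticipate a genuine obstacle: the only point requiring a word of care is the identification of $[1:\cdots:1]$ with $Y_{\mathbf 0}$ (i.e. that the chosen representative lies in $(\K^*)^r$ and maps to the correct class in $T_X = (\K^*)^r/G$), and the remark that $L_\beta$ being saturated is what licenses calling $I_{L_\beta}$ "the toric ideal" rather than merely "a lattice ideal." Everything else is a one-line specialization of results already established.
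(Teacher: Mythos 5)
Your argument is correct and is in substance the same one-line specialization of Theorem~\ref{t:hold} that the paper uses; the only divergence is the choice of parameterizing matrix. The paper takes $Q=\beta$, so that $L_Q=L_\beta$, $\mathcal{L}=\beta L_\beta=\{0\}$, and $L=L_\beta+(q-1)L_\beta=L_\beta$; the identification $Y_\beta=\{[1:\cdots:1]\}$ there rests on the fact that the image of $\t\mapsto(\t^{\beta_1},\dots,\t^{\beta_r})$ is exactly the group $G$, i.e.\ the class of the identity in $T_X=(\K^*)^r/G$. You instead take $Q=\mathbf{0}$, for which $Y_Q=\{[1:\cdots:1]\}$ is immediate, $L_Q=\Z^r$, $L_Q\cap L_\beta=L_\beta$, and again $\mathcal{L}=\{0\}$ so the saturation condition is vacuous; note that your $Q$ is not homogeneous in the sense of Definition~\ref{d: homo}, but Theorem~\ref{t:hold} (with the intersection $L_Q\cap L_\beta$, which you correctly use) does not require homogeneity, so this costs nothing. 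Since the vanishing ideal depends only on the set $\{[1:\cdots:1]\}$ and not on the chosen parameterization, both choices are legitimate, and yours arguably removes the one nontrivial verification in the paper's version. Your alternative route through Lemma~\ref{lm:equality} ($L_1=\{\m\in L_\beta:\mathbf{0}\cdot\m\equiv 0\}=L_\beta$) is even more direct and equally valid, and your closing remark that $L_\beta$ is saturated (so $I_{L_\beta}$ is genuinely a toric ideal) is a sensible justification of the terminology that the paper leaves implicit.
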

\begin{proof}
	Take $Q=\beta$. Then $Y_Q=\{[1:\cdots:1]\}$ and $L_Q=L_\beta$. So, 
	$\mathcal{L}=QL_\beta=\{Q\m| \m\in L_Q\}={0}$. Thus, $L=L_\beta+(q-1){L_\beta}=L_\beta$. This gives $I([1:\cdots:1])=I_{L_\beta}$ by Theorem \ref{t:hold}.
\end{proof}

Before giving another consequence of Theorem \ref{t:hold}, let us introduce more notation. For a homogeneous ideal $J$, we define its zero locus in $X$ to be the following:
$$V_X(J):=\{[P]\in X : F(P)=0, \mbox{for all homogeneous}\: F\in J\}.$$ 
It follows from  Lemma \ref{lm:hom1} and \cite[Proposition 2.3]{Sahin} that $Q$ is homogeneous if and only if the toric ideal $I_{L_Q}$ is homogeneous. Assuming $Q$ to be homogeneous, the zero locus $V_Q:=V_X(I_{L_Q})\cap T_X$ in $T_X$ of the toric ideal $I_{L_Q}$ is a subgroup, by \cite[Corollary 2.4]{Sahin} since we study over a finite field. 
The following consequence of Theorem \ref{t:hold} ensures that it coincides with the subgroup $Y_Q$ if the condition $\mathcal{L}=\mathcal{L}:(q-1)$ holds, generalizing \cite[Proposition 4.3]{Algebraicmethodsforparameterizedcodesandinvariantsofvanishingoverfinitefields} and \cite[Corollary 4.4]{Algebraicmethodsforparameterizedcodesandinvariantsofvanishingoverfinitefields}.
\begin{tm}[Finite Nullstellensatz] \label{tm:nullstellensatz} If $Q$ is homogeneous and ${L}=L_Q +(q-1)L_\beta$, then we have the following
	\begin{enumerate}
		\item $V_Q=V_X(I_{L})$,
		\item $V_Q=Y_Q$, if the condition $\mathcal{L}=\mathcal{L}:(q-1)$ holds,
		\item $I(V_X(I_{L}))=I_{L}$, if the condition $\mathcal{L}=\mathcal{L}:(q-1)$ holds.
	\end{enumerate} 
\end{tm}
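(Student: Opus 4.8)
The plan is to prove the three items in order, leaning on Theorem~\ref{t:hold} and the structural facts about lattice ideals on the torus. For item (1), I would argue both inclusions $V_Q \subseteq V_X(I_L)$ and $V_X(I_L) \subseteq V_Q$ directly. Since $Q$ is homogeneous, $L_Q \subseteq L_\beta$ by Lemma~\ref{lm:hom1}, so $L = L_Q + (q-1)L_\beta$ contains $L_Q$; hence $I_{L_Q} \subseteq I_L$, which immediately gives $V_X(I_L) \subseteq V_X(I_{L_Q})$ and therefore $V_X(I_L) \cap T_X \subseteq V_Q$. Conversely, a point $[P] = [p_1:\cdots:p_r]$ of $T_X$ lies in $V_Q$ exactly when every binomial $\x^{\m^+} - \x^{\m^-}$ with $\m \in L_Q$ vanishes at $P$, i.e. $p^{\m^+} = p^{\m^-}$; since the $p_i \in \K^*$, this is equivalent to $p^{\m} = 1$ for all $\m \in L_Q$. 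To get $[P] \in V_X(I_L)$ I must check $p^{\m} = 1$ for all $\m \in L$; writing $\m = \m' + (q-1)\m''$ with $\m' \in L_Q$, $\m'' \in L_\beta$, I have $p^{\m} = p^{\m'} (p^{\m''})^{q-1}$, and $p^{\m'} = 1$ from $\m' \in L_Q$, while $(p^{\m''})^{q-1} = 1$ because $p^{\m''} \in \K^*$ and $|\K^*| = q-1$. Here I should also note that on $T_X$ the well-definedness of $p^{\m''}$ for $\m'' \in L_\beta$ follows from exactness of $\mathfrak{P}$ (it is precisely the condition making the monomial independent of the choice of homogeneous coordinates). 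This proves $V_Q = V_X(I_L)$.

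For item (2), assume $\mathcal{L} = \mathcal{L}:(q-1)$. By Theorem~\ref{t:hold} (in the homogeneous case, via the Corollary to it) we then have $I(Y_Q) = I_L$. Now $Y_Q \subseteq X$ is a subgroup of $T_X$, and $V_X(I(Y_Q)) \cap T_X$ recovers $Y_Q$ because $Y_Q$ is a finite set of points in the torus that is Zariski closed in $T_X$ — more precisely, since $I(Y_Q)$ is by definition the ideal of all homogeneous forms vanishing on $Y_Q$, every point of $V_X(I(Y_Q)) \cap T_X$ satisfies all those forms, and by \cite[Corollary~2.4]{Sahin} (finite field, subgroup setting) the vanishing set of a homogeneous lattice ideal inside $T_X$ is exactly the corresponding subgroup, so $V_X(I(Y_Q)) \cap T_X = Y_Q$. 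Combining, $V_Q = V_X(I_L) \cap T_X = V_X(I(Y_Q)) \cap T_X = Y_Q$. (The harmless point to double-check is that the "$\cap T_X$" in the definition of $V_Q$ and the global $V_X(I_L)$ of item (1) are reconciled; since $V_Q$ is defined as $V_X(I_{L_Q}) \cap T_X$ and I showed $V_X(I_L) \cap T_X = V_Q$, this is consistent.)

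For item (3), again assume $\mathcal{L} = \mathcal{L}:(q-1)$. Then by items (1) and (2), $V_X(I_L) \cap T_X = V_Q = Y_Q$, and I claim $V_X(I_L) = Y_Q$ as subsets of $X$, i.e. $V_X(I_L)$ has no points outside the torus. This holds because $I_L$, being a lattice ideal for a lattice $L \subseteq L_\beta$ that contains $(q-1)L_\beta$, contains the "torus-defining" binomials: for each $i$, $(q-1)(\beta\text{-relations})$ force the appropriate powers of coordinates to be units on the zero set, so no coordinate $x_i$ can vanish on $V_X(I_L)$ — alternatively, $I_L \supseteq I_{(q-1)L_\beta} = I(T_X)$ by Corollary~\ref{c:torusideal}, hence $V_X(I_L) \subseteq V_X(I(T_X)) = \overline{T_X} \cap \{\text{no new points}\}$; since $T_X$ is already the full torus and $I(T_X)$ cuts it out, $V_X(I_L) \subseteq T_X$. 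Therefore $V_X(I_L) = Y_Q$, and so $I(V_X(I_L)) = I(Y_Q) = I_L$, the last equality again by Theorem~\ref{t:hold}.

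The main obstacle I anticipate is item (3): pinning down rigorously that $V_X(I_L)$ acquires no points on the toric boundary $X \setminus T_X$. The cleanest route is the containment $I(T_X) = I_{(q-1)L_\beta} \subseteq I_L$ from Corollary~\ref{c:torusideal}, which immediately confines $V_X(I_L)$ to $T_X$; once that is in hand, item (3) reduces to items (1), (2) and Theorem~\ref{t:hold}. A secondary subtlety throughout is keeping the quotient-representation bookkeeping straight — that $p^{\m}$ for $\m \in L_\beta$ is well-defined on $[P] \in T_X$ and equals $1$ automatically when $\m \in (q-1)L_\beta$ — but this is exactly the content of the exact sequences $\mathfrak P$, $\mathfrak P^*$ recalled in the introduction and of \cite[Proposition 2.3]{Sahin}.
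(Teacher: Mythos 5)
Your proposal is correct, and it uses the same key ingredients as the paper (Lemma \ref{lm:hom1}, Theorem \ref{t:hold} via its corollary, Corollary \ref{c:torusideal}, and the fact that over a finite field every subset is Zariski closed so $V_X(I(Y))=\bar{Y}=Y$), but the mechanics differ in two places. For (1) the paper first proves the ideal identity $I_L=I_{L_Q}+I_{(q-1)L_\beta}$ by the decomposition $\m^{\pm}=\m_1^{\pm}+\m_2^{\pm}+\cc$ and the resulting binomial factorization, and then reads off $V_Q=V_X(I_L)$ from $I_L=I_{L_Q}+I(T_X)$; you instead argue pointwise, using $p^{\m}=p^{\m'}(p^{\m''})^{q-1}=1$ on torus points, which is a slightly more elementary route that only needs the obvious containments $I_{L_Q}\subseteq I_L$ and $I_{(q-1)L_\beta}\subseteq I_L$ rather than the exact ideal sum. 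Second, you reverse the paper's order of deduction: the paper proves (3) first via the sandwich $I_L\subseteq I(V_Q)\subseteq I(Y_Q)=I_L$ (using $Y_Q\subseteq V_Q$ and $J\subseteq I(V_X(J))$) and then obtains (2) by applying $V_X$ to equal ideals, while you get (2) directly from $I(Y_Q)=I_L$ plus closedness and then (3) from $V_X(I_L)=Y_Q$; both are valid. Two small points of presentation: the containment $V_X(I_L)\subseteq T_X$ (which you correctly extract from Corollary \ref{c:torusideal} and closedness of $T_X$) is already needed to finish (1), not just (3), so state it there; and the fact $V_X(I(Y_Q))=\bar{Y}_Q$ that you invoke in (2) is \cite[Lemma 2.8]{Sahin} (as the paper cites), not Corollary 2.4, which concerns the zero locus of a homogeneous lattice ideal being a subgroup.
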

\begin{proof} Notice that $\m\in L \iff \m=\m_1+\m_2$, for $\m_1\in L_Q$ and $\m_2\in (q-1)L_\beta$. It follows that  $$\m\in L \iff \m^{+}=\m_1^{+}+\m_2^{+}+\textbf{c} ~~\mbox{and}~~ \m^{-}=\m_1^{-}+\m_2^{-}+\textbf{c},~~\mbox{for some}~~ \textbf{c} \in \N^r. $$ 
	Turning these vectors into monomials, we get the following relation
	$$\x^{\m^{+}}-\x^{\m^{-}}=\x^{\cc}[\x^{\m_2^{+}}(\x^{\m_1^{+}}-\x^{\m_1^{-}})+\x^{\m_1^{-}}(\x^{\m_2^{+}}-\x^{\m_2^{-}})].$$ 
	Hence, the ideal $I_L=I_{L_Q}+I_{(q-1)L_\beta}$. By Corollary \ref{c:torusideal}, we also have $I_L=I_{L_Q}+I(T_X)$ so that $V_Q=V_X(I_L)$ establishing (1).
	
	Take a point $[P]=[\t^{\q_1}:\cdots:\t^{\q_r}]\in Y_Q$. We have $(\x^{\m^{+}}-\x^{\m^{-}})(P)=\t^{Q\m^+}-\t^{Q\m^-}=0$ whenever $\m\in L_Q$. So, we observe that $Y_Q \subseteq V_X(I_{L_Q})$. As $Y_Q$ is a subgroup of $T_X$, it follows that $Y_Q\subseteq V_Q$.
	Taking ideals of both sides yields $I(V_Q)\subseteq I(Y_Q)$. As $V_Q=V_X(I_L)$ by (1) above, we finally have $I_L \subseteq I(V_Q)\subseteq I(Y_Q)$. Under the assumptions, $I(Y_Q)=I_L$ by Theorem \ref{t:hold}, so the three ideals coincide in the last containment, verifying (3). By \cite[Lemma 2.8]{Sahin}, $\bar{Y}_Q=V_X(I(Y_Q))$ and $\bar{V}_Q=V_X(I(V_Q))$. As we work over a finite field, any set is closed with respect to Zariski topology and thus $Y_Q=V_Q$ proving (2).
\end{proof}
There are examples for which $Y_Q\neq V_Q$, when the condition $\mathcal{L}=\mathcal{L}:(q-1)$ is broken.
\begin{ex} \label{Ex:nullCI}
	Let $X=\cl H_{2}$ be the Hirzebruch surface over $\F_{11}$ and $Q=[1\;  2\;  3\;  4 ]$. Although $Q$ is not homogeneous, we can remedy this by adjoining $\bb$ and obtaining the homogeneous matrix $Q'=[Q~\bb]^T$. Notice that $Y_Q=Y_{Q'}$ and so $I(Y_{Q'})=\la x_1^2x_2-x_4, x_1^5-x_3^5 \ra$, see Example \ref{ex:gb} .
	
	By Theorem \ref{tm:nullstellensatz}, we have $V_{Q'}=V_X(I_L)$ for the lattice $L=L_{Q'}+(q-1)L_{\bb}$. Since $L_{Q'}=\la(2,1,0,-1)\ra$ and $L_{\bb}=\la(2,1,0,-1),(1,0,-1,0)\ra$ we have $L=\la(2,1,0,-1),(10,0,-10,0)\ra$ and hence the ideal $I_L=\la x_1^2x_2-x_4, x_1^{10}-x_3^{10} \ra$.  Applying \cite[Algorithm 1]{Sahin}, we obtain the matrix $A$ below whose subgroup $Y_A$ is precisely the zero locus $V_X(I_L)$ of the lattice ideal $I_L$:
	$$A ={\begin{bmatrix}
			-1 & 2 &  -1&  0\\
			~~0 & 1 & ~~0&  1\\
			~~0 & 10& ~~0&  0\\
			~~0 & 0 & ~~1&  0\\
			
	\end{bmatrix}}$$
	We now apply one of the algorithms developed in the previous sections to conclude that 
	$$I(V_{Q'})=I(V_X(I_L))=I(Y_A)=I_L.$$ Therefore, $V_{Q'}\neq Y_{Q'}$ as otherwise they would have the same vanishing ideals.  
\end{ex}

We close this section discussing another special case where $Q$ is diagonal.

\begin{defi}
	The toric set parameterized by a diagonal matrix is called a degenerate torus.
\end{defi}  

Let $\eta$ be a generator of the cyclic group $\K^*$, then for all $t_i \in \K^*$ we can write $t_i=\eta^{h_i}$ for some $0 \leq h_i \leq q-2$. The following is the generalized version of the corresponding result in \cite{LVZ} valid for $X=\Pp^n$. The first proof is given in  \cite{Sahin} and we give another here using Lemma \ref{lm:equality}.

\begin{tm}\label{t:degenerate} Let $Q=diag(q_1,\dots,q_r)\in M_{r\times r}(\mathbb{\Z})$ and $D=diag(d_1,\dots,d_r)$ where $d_i=|\eta^{q_i}|$. Then $I(Y_Q)=I_L$ for $L=D(L_{\beta D})$.
\end{tm}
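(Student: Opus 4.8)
The plan is to apply Lemma~\ref{lm:equality}, which identifies $I(Y_Q)$ with $I_{L_1}$ for $L_1=\{\m\in L_\beta : Q\m\equiv 0 \bmod (q-1)\}$, and then show $L_1 = D(L_{\beta D})$. So the whole proof reduces to a lattice-theoretic identity, and the argument will be purely combinatorial/arithmetic with no geometry. First I would unwind the definitions: an element of $L_{\beta D}$ is a vector $\m\in\Z^r$ with $\beta D\m=0$, and $D(L_{\beta D})=\{D\m : \beta D\m = 0\}$. Writing $\w = D\m$, this set is $\{\w\in\Z^r : d_i \mid w_i \text{ for all } i,\ \beta\w = 0\}$; in other words $D(L_{\beta D}) = \{\w\in L_\beta : d_i\mid w_i \text{ for all }i\}$ since $D$ acts diagonally and $\beta(D\m)=(\beta D)\m$.

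Next I would show this coincides with $L_1$. The key arithmetic fact is that $d_i = |\eta^{q_i}| = (q-1)/\gcd(q_i, q-1)$, since $\eta$ has order $q-1$. Hence for an integer $w_i$, the divisibility $d_i \mid w_i$ is equivalent to $q_i w_i \equiv 0 \bmod (q-1)$: indeed $q_i w_i \equiv 0 \bmod (q-1)$ iff $(q-1)\mid q_i w_i$ iff $\tfrac{q-1}{\gcd(q_i,q-1)} \mid \tfrac{q_i}{\gcd(q_i,q-1)} w_i$ iff $d_i \mid w_i$, the last step because $\tfrac{q_i}{\gcd(q_i,q-1)}$ is coprime to $d_i$. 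Running this coordinatewise, and using that $Q$ is diagonal so $(Q\w)_i = q_i w_i$, gives: $\w\in L_\beta$ satisfies $Q\w\equiv 0\bmod(q-1)$ iff $d_i\mid w_i$ for all $i$ iff $\w\in D(L_{\beta D})$. That is exactly $L_1 = D(L_{\beta D})$, and combined with Lemma~\ref{lm:equality} we conclude $I(Y_Q) = I_{L_1} = I_{D(L_{\beta D})}$, as claimed.

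I expect the main (and essentially only) obstacle to be the coprimality step in the divisibility equivalence — making sure one correctly handles the case $q_i = 0$ (where $d_i = (q-1)/(q-1) = 1$, so the condition $d_i\mid w_i$ is vacuous, matching $q_i w_i = 0 \equiv 0$) and, more generally, that one cancels $\gcd(q_i,q-1)$ cleanly on both sides of $(q-1)\mid q_i w_i$. One should also double-check that $D$ being a diagonal integer matrix makes $D(L_{\beta D})$ genuinely a sublattice of $\Z^r$ and that the map $\m\mapsto D\m$ is injective on $L_{\beta D}$ when no $d_i$ vanishes (they never do, as $d_i\geq 1$), so that describing $D(L_{\beta D})$ by its image is unambiguous. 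Everything else is bookkeeping, so I would keep the write-up to the two equalities $D(L_{\beta D}) = \{\w\in L_\beta : d_i\mid w_i\}$ and $\{\w\in L_\beta : d_i\mid w_i\} = L_1$, then invoke Lemma~\ref{lm:equality}.
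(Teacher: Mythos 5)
Your proposal is correct and follows essentially the same route as the paper: reduce via Lemma~\ref{lm:equality} to the lattice identity $L_1=D(L_{\beta D})$, then use $d_i=(q-1)/\gcd(q_i,q-1)$ and the coprimality of $q_i/\gcd(q_i,q-1)$ with $d_i$ to translate $q_iw_i\equiv 0 \bmod (q-1)$ into $d_i\mid w_i$ coordinatewise. The only cosmetic difference is that you phrase it as a chain of set equalities while the paper proves the two inclusions separately.
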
 
\begin{proof} By Lemma \ref{lm:equality}, it is enough to show that $L=L_1$ where $L_1=\{\m\in L_\beta: Q\m\equiv 0\mbox{ mod}\; (q-1)\}$. Let $\m$ be any element in $L$. Then $\m=D\textbf{z}$ for some $\textbf{z}\in \Z^r$ and $\m\in L_\beta$. Since  $d_i=(q-1)/\mbox{gcd}(q-1,q_i)$, $q_id_i\equiv 0\;\mbox{mod}\;(q-1)$ for all $d_i$. Therefore, $Q\m=QD\textbf{z}\equiv 0\;\mbox{mod}\; (q-1)$ and so, $\m\in L_1$.
	
	Take $\m \in L_1$. Then $Q\m\equiv 0\;\mbox{mod}\;(q-1)$, that is, for every $1\leq i\leq r$ there exist $z_i\in \Z$ such that  $q_im_i=(q-1)z_i$. Hence, $$\dfrac{q_i}{\mbox{gcd}(q-1,q_i)}m_i= \dfrac{q-1}{\mbox{gcd}(q-1,q_i)}z_i=d_iz_i.$$ Since ${q_i}/\mbox{gcd}(q-1,q_i)$ and ${q-1}/\mbox{gcd}(q-1,q_i)$ are coprime, it follows that $d_i$ divides $m_i$. Therefore, $m_i=d_iz'_i$ for some $z'_i\in\Z$ and so, $\m=D\textbf{z}'$ for $\textbf{z}'=(z'_1,\dots,z'_r)$. Hence, $\m\in L.$
\end{proof}
\section*{Acknowledgment}
The  author is supported by T\"{U}B\.{I}TAK-2211. The article is part of her PhD thesis carried out as part of the T\"{U}B\.{I}TAK Project 114F094 under supervision by the Principal Investigator Mesut \c{S}ahin. The author is very thankful to him for his guidance, efforts and many useful advises.

\end{document}